\documentclass[11pt,leqno]{amsart}
\usepackage{amsmath,amssymb,amsthm}
\usepackage{hyperref}

\usepackage{tikz-cd}

\usepackage{hyperref}

\usepackage{bbm}

\DeclareMathOperator{\diam}{diam\,}
\DeclareMathOperator{\co}{co}

\renewcommand{\geq}{\geqslant}
\renewcommand{\leq}{\leqslant}

\newcommand{\innt}{\operatorname{int}}

\newcommand{\supp}{\operatorname{supp}}
\newcommand{\spann}{\operatorname{span}}

\newtheorem{theorem}{Theorem}[section]
\newtheorem{lemma}[theorem]{Lemma}

\newtheorem{proposition}[theorem]{Proposition}

\theoremstyle{definition}

\newtheorem{example}[theorem]{Example}

\theoremstyle{remark}
\newtheorem{remark}[theorem]{Remark}
\newtheorem{question}{Question}
\numberwithin{equation}{section}

\def\fnote#1{\footnote}

\def\ignora#1{}
\def\n3#1{\left\vert  \! \left\vert \! \left\vert \, #1 \, \right\vert \!
  \right\vert \! \right\vert }

\renewcommand{\leq}{\le}

\usepackage{accents}
\usepackage[most]{tcolorbox}
\usepackage{comment}

\let\emptyset\varnothing

\newcommand{\eps}{\varepsilon}
\newcommand{\conv}{\text{co}}

\begin{document}


\author{ Gin\'es L\'opez-P\'erez }\address{Universidad de Granada, Facultad de Ciencias. Departamento de An\'{a}lisis Matem\'{a}tico, 18071-Granada
(Spain)} \email{ glopezp@ugr.es}


\author{ Abraham Rueda Zoca }\address{Universidad de Granada, Facultad de Ciencias. Departamento de An\'{a}lisis Matem\'{a}tico, 18071-Granada
(Spain)} \email{ abrahamrueda@ugr.es}
\urladdr{\url{https://arzenglish.wordpress.com}}

\subjclass[2020]{46B03, 46B04, 46B20, 46B22}

\keywords{Radius; diameter; strongly regular; Radon-Nikodym}

\title{Strongly regular Banach spaces with big weakly open subsets in the unit ball}

\begin{abstract}
We construct, given $1<p<\infty$, a Banach space $Y$ and a closed, convex and symmetric set $L\subseteq B_Y$ with the following properties:
\begin{enumerate}
\item $Y^{**}$ is strongly regular (henceforth, $Y$ is strongly regular). 
\item Every non-empty relatively weakly-star open subset of $\overline{L}^{w^*}$ (the $w^*$ closure of $L$ in $Y^{**}$) has radius one. In particular, every non-empty relatively weakly open subset of $L$ has radius $1$.
\item Every non-empty relatively weakly open subset of $L$ has diameter, at least, $2^\frac{1}{p}$. 
\end{enumerate}
This constitutes an advance to the question whether there exists a strongly regular Banach spaces satisfying that every non-empty relatively weakly open subset of the unit ball has radius $1$. As a partial answer, we get that for every $\varepsilon>0$ there exists a strongly regular Banach spaces where weakly open subsets have radius, at least, $1-\varepsilon$.
\end{abstract}

\maketitle

\markboth{GIN\'ES L\'OPEZ-P\'EREZ AND ABRAHAM RUEDA ZOCA}{STRONGLY REGULAR BANACH SPACES WITH BIG WEAKLY OPEN ...}

\section{Introduction}

It is fair to say that one of the most studied properties in Banach spaces theory is the Radon-Nikod\'{y}m property, from now on RNP, which was origillay defined by the validity of a vector version of the classic Radon-Nikod\'{y}m theorem on derivation of measures. Namely, $X$ has the RNP if for any $\sigma$-finite measure space $(\Omega, \Sigma, \mu)$ and any $\mu$-continuous vector measure of bounded variation $\nu\colon \Sigma \rightarrow X$, there exists a Bochner integrable function $f\colon \Omega \rightarrow X$ such that 
\begin{equation}\label{eq:deriv} \nu(A) =\int_A f \, d\mu \end{equation}
for every $A \in \Sigma$, see \cite{Bourgin} or \cite{disuhl} for details.

Apart from the very definition, which is the natural desire of the extension of the famous Radon-Nikod\'{y}m theorem to an infinite-dimensional context, there is big amount of characterisations of the RNP in multiple different contexts (as a sample of this, we refer the reader to \cite[Section VII.6]{disuhl} where more than 25 characterisations of the RNP are exhibited). From the point of view of the geometry of Banach spaces, probably the most famous characterisation of the RNP is given in terms of \textit{dentability}: a Banach space $X$ has the RNP if, and only if, every bounded and subset $C$ of $X$ contains slices (i.e. intersections of open half spaces with $C$) of arbitrarily small diameter (the above is shortened saying that \textit{$C$ is dentable}).

Other three properties closely related to the RNP were intensively studied in the eighties by many authors. In order to shed light on the immediate relations among them, let us introduce the following notation: for a bounded set $C$ of a Banach space $X$, a \textit{slice of $C$} is, as we indicated before, the intersection of an open half space with $C$; in other words, a set of the following form
$$S(C,f,\alpha):=\{x\in C: f(x)>\sup f(C)-\alpha\}$$
where $f\in X^*$ and $\alpha>0$. If $C$ is additionally convex, by a \textit{convex combination of slices of $C$} we mean a set of the following form
$$\sum_{i=1}^n \lambda_i S_i$$
where $\lambda_1,\ldots, \lambda_n\in [0,1]$ are such that $\sum_{i=1}^n \lambda_i=1$ and every $S_i$ is a slice of $C$.

Coming back to properties related to the RNP, given a Banach space $X$, we say that $X$:
\begin{enumerate}
    \item has the RNP if every bounded subset $C$ of $X$ is dentable (i.e. $C$ contains slices of $C$ of arbitrarily small diameter);
    \item has the \textit{point of continuity property (PCP)} if every bounded subset $C$ of $X$ contains non-empty weakly open subsets of $C$ of arbitrarily small diameter;
    \item has the \textit{convex point of continuity property (CPCP)} if every bounded and convex subset $C$ of $X$  contains non-empty weakly open subsets of $C$ of arbitrarily small diameter;
    \item is \textit{strongly regular (SR)} if every bounded, closed and convex subset $C$ of $X$  contains convex combinations of slices of $C$ of arbitrarily small diameter.
\end{enumerate}
We refer the reader to \cite{boro,ggms,gmscouT,gms} for background around the above properties. Since every slice is a weak open set, it is clear that the RNP implies the PCP, whereas the PCP obviously implies the CPCP. It is not trivial that the CPCP implies SR, but this result follows from the fact that, given any closed, convex and bounded set $C$ in a Banach space $X$, every non-empty weakly open subset of $C$ contains a convex combination of slices of $C$ \cite[Lemma II.1]{ggms}. To sum up, we have that RNP$\Rightarrow$PCP$\Rightarrow$CPCP$\Rightarrow$SR, and it is known that none of the above converse does hold (see \cite{boro}, \cite{gmscouT} and \cite{gms} for respective counterexamples). 

Since all the above properties are preserved under equivalent renorming natural questions appear at this point. Let us focus on the RNP: in view of the previous characterisation, given a Banach space $X$ failing the RNP, we can find a bounded subset $C$ of $X$ and a positive $\varepsilon\leq \diam(C)$ such that every slice of $C$ has diameter, at least, $\varepsilon$. In order to push further this characterisation of the failure of the RNP, two natural questions arise:
\begin{itemize}
\item[i)] Can $\varepsilon$ be taken close to $\diam(C)$?
\item[ii)] In case that the answer is yes, can $C$ be used to get an equivalent renorming of $X$ such that the diameter of the slices of the new unit ball is (close to) $2$?
\end{itemize}
As we have already indicated, the above questions do make sense for the failure of CPCP (resp. SR) if we replace slices with non-empty weakly open sets (resp. convex combinations of slices). Moreover, far from a theoretical interest, the above questions have been analysed in the literature in a series of publications \cite{blr17,ivakhno06,scsewe,ewer}. As a matter of fact, it is proved in \cite[Corollary 3.2]{scsewe} that every Banach space $X$ failing the RNP admits, for every $\varepsilon>0$, an equivalent renorming such that every slice of the new unit ball has diameter, at least, $1-\varepsilon$, leaving the open question whether $2-\varepsilon$ can be raiched (which is a more natural statement) in the above result. Even though an affirmative answer was given in \cite{ewer} for Banach lattices, recent works about the relation between slices with maximal radius and maximal diameter motivates the existence of that gap for the diameter of slices. In order to show this, let us recall a bit of notation: given  Banach space $X$, we say that $X$ has the:
\begin{enumerate}
    \item \textit{slice diameter two property (slice-D2P)} if every slice of $B_X$ has diameter $2$;
    \item \textit{r-big slice property (r-BSP) if every slice $S$ of the unit ball $B_X$ has radius exactly 1, in other words, if given any $x\in X$ and any $\varepsilon>0$ there exists a $y\in S$ such that $\Vert x-y\Vert \geq1-\varepsilon$.}
\end{enumerate}

The slice-D2P has received a lot of attention in during the present century (c.f. e.g. \cite{ahntt16,aln2,blr15eje1,nw01}), whereas the r-BSP was introduced in \cite{ivakhno06} but it remained unnoticed until the recent research we will describe below. From the immediate relation between the radius and the diameter of a set, it is clear that the slice-D2P$\Rightarrow$r-BSP (it was proved in \cite{ivakhno06}). The validity for the converse was posed as an open question in the above mentioned paper by Y. Ivakhno.  A negative answer was provided in \cite[Theorem 3.7]{hllnr20}, where it was proved that the space $X=(JT_\infty)_*$ has the r-BSP but the infimum of the diameter of slices of the unit ball is $\sqrt{2}$. Such counterexample was put further in \cite[Theorem 1.1]{rz23}, where an example of a Banach space $X$ with the r-BSP, the CPCP and such that the infimum of the diameter of slices of the unit ball equals $1$ is constructed. This example shows that the r-BSP and the slice-D2P are as far as they can be because, for a given slice, the radius and the diameter can almost coincide. The above suggests that, if we pursue to characterise the Banach spaces failing the RNP as those admiting an equivalent renoming where all the slices of the unit ball are extremely big, considering the radius is a more natural measure than considering the diameter to define ``big slices''. This is also suggested by \cite[Theorem 2]{ivakhno06}, where it is proved that every Banach space failing the RNP admits, for every $\varepsilon>0$, an equivalent renorming under which every slice of the new unit ball has radius, at least, $1-\varepsilon$. Because of this reason, it is natural to consider the following question.

\begin{question}\label{quest:RNP}
Let $X$ be a Banach space failing the RNP. Is there any equivalent renoming on $X$ satisfying the r-BSP?
\end{question}

Note that, since there are examples of Banach spaces with the CPCP and failing the RNP, an affirmative answer to the above question would imply that Banach spaces with the r-BSP and the CPCP must exists. Such examples, indeed, are known to exist \cite{rz23}. The question whether there exists a Banach space with the slice-D2P and the CPCP is, to the best of our knowledge, an open question \cite[Remark 3.4]{rz23}.

In order to establish an analogue question to Question~\ref{quest:RNP} for the CPCP, we will consider two more properties of Banach spaces related to having big weakly open subsets of the unit ball: given a Banach space $X$, we say that $X$ has the:
\begin{itemize}
    \item \textit{diameter two property (D2P)} if every non-empty relatively weakly open subset of $B_X$ has diameter $2$,
    \item  \textit{r-big weak open property (r-BWOP)} if every non-empty relatively weakly open subset $W$ of the unit ball $B_X$ has radius exactly 1, in other words, if given any $x\in X$ and any $\varepsilon>0$ there exists a $y\in W$ such that $\Vert x-y\Vert \geq 1-\varepsilon$.
\end{itemize}

As well as happen with the slice version, the diameter two property has received a lot of attention in the literature (again we refer to \cite{ahntt16,aln2,blr15eje1,blr15eje2,nw01}). The r-BWOP, however, was recently introduced in \cite{lmr25} in order to show that there are Banach spaces with the r-BWOP and such that the infimum of diameter of slices of the unit ball is $1$ \cite[Theorem 1.1]{lmr25}. All the discussion around Question~\ref{quest:RNP} and the above mentioned \cite[Theorem 1.1]{lmr25} makes the following question to make sense.

\begin{question}\label{ques:CPCP}
Let $X$ be a Banach space failing the CPCP. Is there any equivalent renorming for which every non-empty relatively weakly open subset of $B_X$ has radius $1$?
\end{question}

The similar comment after Question~\ref{quest:RNP} applies here: since there are Banach spaces failing the CPCP and being SR, an affirmative answer to Question~\ref{ques:CPCP} would imply the existence of a SR Banach space with the r-BWOP. 

\begin{question}\label{ques:ejeSRwithrBWOP}
Is there a strongly regular Banach space $X$ with the r-BWOP?
\end{question}

The aim of the present note it to work in the direction of giving a positive answer to the above question. Indeed, our aim is to prove the following result:
\begin{theorem}\label{theo:maintheorem}
Given $1<p<\infty$, there exists a Banach space $Y$ and a closed, convex and symmetric set $L\subseteq B_Y$ with the following properties:
\begin{enumerate}
\item $Y^{**}$ is strongly regular (henceforth, $Y$ is strongly regular). 
\item Every non-empty relatively weakly-star open subset of $\overline{L}^{w^*}$ (the $w^*$ closure of $L$ in $Y^{**}$) has radius one. In particular, every non-empty relatively weakly open subset of $L$ has radius $1$.
\item Every non-empty relatively weakly open subset of $L$ has diameter, at least, $2^\frac{1}{p}$. 
\end{enumerate}
\end{theorem}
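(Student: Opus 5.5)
The plan is to build $Y$ as an $\ell_p$-type construction over a tree, in the spirit of the examples in \cite{rz23} and \cite{lmr25}. Concretely, take $Y$ to be the $\ell_p$-sum of countably many copies of a space with a $c_0$-like (or $L_1$-like) component, arranged along a dyadic tree $\{t\}$. Strong regularity of $Y^{**}$ should come for free from the $\ell_p$-structure. The radius-one and diameter properties should come from a set $L$ generated by tree-like elements, each of which can be split, weakly, into two far-apart pieces. The key numerical point is this: if $x=\frac{1}{2}(y+z)$ in $L$ with $y,z$ supported on disjoint blocks of an $\ell_p$-sum and $\|y\|=\|z\|=1$, then $\|y-z\|=2^{1/p}$. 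This explains the constant $2^{1/p}$ in (3).

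\textbf{Step 1 (construction of $L$).} Fix $1<p<\infty$ and a branching system $(e_t)$ indexed by the dyadic tree. Let $L$ be the closed absolutely convex hull of vectors $x_t$ with the following property: each $x_t$ is, up to an error going to $0$ with the level, a weak limit of averages $\frac12(x_{t0}+x_{t1})$ of its successors. The successors should be placed so that $x_{t0}-x_{t1}$ lives on a new $\ell_p$-block. Since $p>1$, the space $Y$ is a finite $\ell_p$-sum (or an $\ell_p$-sum) of spaces whose biduals are SR, for instance a reflexive or $c_0$-based space with separable dual. SR passes to $\ell_p$-sums (\cite{ggms}), which gives (1) by verifying SR for the summands' biduals.

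\textbf{Step 2 (diameter, item (3)).} Take a nonempty relatively weakly open $W\subseteq L$. By \cite[Lemma II.1]{ggms} and a density argument, we may assume $W$ contains a point $x$ that is a finite convex combination of the $x_t$ at a deep level $n$. We then replace each $x_t$ by its successors to produce two points $y,z\in L$ with the following properties: $x=\frac12(y+z)$ approximately; $y,z$ lie in $W$, because the difference $y-z$ is weakly null along the construction; and $y-z$ splits into disjoint $\ell_p$ blocks of norm about $1$ each. Then $\|y-z\|\ge 2^{1/p}-\varepsilon$.

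\textbf{Step 3 (radius, item (2)).} Let $W$ be a nonempty relatively $w^*$-open subset of $\overline{L}^{w^*}\subseteq Y^{**}$, and fix $u\in Y^{**}$. Iterate the splitting $n$ times so that $x$ is approximated by averages of $2^n$ points $y_1,\dots,y_{2^n}\in W$ with mutually almost disjoint tails. Since $u$ is a single vector of $Y^{**}$, we have $\|u\|\geq$ the norm of its restriction to the tail blocks. At least one $y_i$ must then satisfy $\|u-y_i\|\ge 1-\varepsilon$: otherwise $u$ would be close to $2^n$ norm-one vectors on almost disjoint blocks, which is impossible in an $\ell_p$-sum with $p<\infty$ once $2^n$ is large. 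This pigeonhole step uses that $\|u\|$ is finite and that the $\ell_p$-norm of $u$ on the $2^n$ blocks tends to be small on most of them.

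\textbf{Main obstacle.} The hardest part is reconciling (1) with (2)–(3). The splitting must be weakly null, so that $W$ stays big, yet the ambient bidual must still be SR. I would first try to take $Y=\big(\bigoplus X_n\big)_{\ell_p}$, where each $X_n$ is finite dimensional, so that $Y$ is reflexive and SR is trivial. The difficulty then shifts to making weakly null splittings inside a reflexive space. Weak nullness would be achieved using infinitely many blocks, with $x_t$ defined as a weighted series whose tail spreads out. If this fails, I would use summands isomorphic to $c_0$ or $JT$-type spaces, whose biduals have the RNP or CPCP, and invoke stability of SR under $\ell_p$-sums.
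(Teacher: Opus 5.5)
There is a genuine gap, and it sits exactly where you place the ``main obstacle'': your plan contains no mechanism that yields strong regularity, and the concrete mechanisms you suggest are incompatible with (2)--(3).

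Item (3) alone says that the bounded convex set $L$ has no nonempty relatively weakly open subset of diameter $<2^{1/p}$. So $Y$ must fail the CPCP, and hence the RNP. The theorem is therefore asking for a space in the gap between SR and CPCP, with SR required even for $Y^{**}$.

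\begin{itemize}
\item Your first choice, $Y=(\bigoplus X_n)_{\ell_p}$ with $\dim X_n<\infty$, is reflexive. Every closed bounded convex set in it is dentable, so it has slices of arbitrarily small diameter, hence of arbitrarily small radius. No choice of the $x_t$ can give (2) or (3), and the weakly null splittings with norm bounded below that Step 2 needs do not exist there.
\item The fallback fails for the same reason. If the summands (or their biduals) have the RNP or the CPCP, as happens for $JT$, then so does their $\ell_p$-sum, and (3) fails again.
\item $c_0$ is no candidate either: $c_0^{**}=\ell_\infty$ contains $L_1$ isometrically, so it is not even SR.
\end{itemize}

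In particular, the claim that SR of $Y^{**}$ ``comes for free from the $\ell_p$-structure'' is false; for instance $\ell_p(L_1)$ is not SR. Steps 2--3 manipulate an object whose coexistence with (1) is precisely what has to be proved. You also never address how to keep $L\subseteq B_Y$ while getting radius exactly $1$, which needs sharp control of the norm of $Y$ on $L$.

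The missing idea is the paper's route to SR without CPCP. A space not containing $\ell_1$ is SR (the Ghoussoub--Maurey--Schachermayer result used in the proof of Theorem~\ref{theo:espafuerteregular}), so the task is to arrange $\ell_1\not\hookrightarrow Y^{**}$. The paper does this in three stages.

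\begin{itemize}
\item It works in $X=(\bigoplus_n\ell_p(T_\infty^n))_{c_0}$ over the \emph{infinitely} branching tree, with $K$ the set of positive $z$ satisfying $z_\phi=1$ and $z_t^p\ge\sum_k z_{t\wedge k}^p$. The key fact is that $\overline{\spann}(K)/X$ embeds isometrically into $\ell_p(\Gamma)$ via limits along branches (Theorem~\ref{theo:cocientereflexivo}), so it is reflexive.
\item A modified Davis--Figiel--Johnson--Pe\l czy\'nski interpolation with $B_n=\overline{\co}(2^nW\cup 2^{-n}B_X)$ and $W=(K_0-K_0)/2^{1/p}$ produces $Y$ with $Y^*$ separable. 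It also gives $Y^{**}/Y$ reflexive (Lemma~\ref{lemma:cocibidualporespacio}), because $Q(\overline{W}^{w^*})$ is relatively weakly compact in $X^{**}/X$. The three-space property then gives $\ell_1\not\hookrightarrow Y^{**}$.
\item The exact constants $W\subseteq\frac{1}{\sqrt3}B_Y$ and $\Vert j\Vert\le\sqrt3$ transfer radius one and diameter at least $2^{1/p}$ from $W\subseteq X$ (Theorems~\ref{theo:weakopenconjuinter} and~\ref{theo:w*openbidual}) to $L=\sqrt3\,W\subseteq B_Y$, and to $\overline{L}^{w^*}$ via $j^{**}$.
\end{itemize}

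Your disjoint-block computation of $2^{1/p}$ and your pigeonhole argument against a bidual center are sensible local ingredients. The paper instead uses the infinite branching to place the split mass on children $t\wedge k$ where the center is small. Without the $\ell_1$-free ambient space and the interpolation step, however, these ingredients do not add up to a proof of (1) together with (2)--(3).
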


As a consequence of the above theorem is the following isomorphic result.

\begin{theorem}\label{theo:renormainejem}
Let $1<p<\infty$ and let $Y$ the Banach space of Theorem~\ref{theo:maintheorem}. Then, given $\varepsilon>0$ there exists an equivalent renorming of $Y$ such that every non-empty relatively $w^*$ open subset of the unit ball of $B_{Y^{**}}$ has radius at least $\frac{1}{1+\varepsilon}$ and diameter at least $\frac{2^\frac{1}{p}}{1+\varepsilon}$. 

In particular, for every $\varepsilon>0$ there exists a SR Banach space $X$ for which every non-empty relatively weakly open subset of $B_X$ has diameter, at least, $2-\varepsilon$.
\end{theorem}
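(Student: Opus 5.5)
The plan is to renorm $Y$ so that the new unit ball is essentially $\overline{\co}(L\cup\varepsilon B_Y)$, which lets us transfer properties (2) and (3) of Theorem~\ref{theo:maintheorem} from $L$ to the whole ball. Fix $\varepsilon>0$ and take $Y$ and $L\subseteq B_Y$ as in Theorem~\ref{theo:maintheorem}. Put $B:=\overline{L+\varepsilon B_Y}$. This set is closed, convex, symmetric and bounded, and it contains $\varepsilon B_Y$. Let $\n3{\cdot}$ be its Minkowski functional. Then $\frac{1}{1+\varepsilon}\Vert y\Vert\leq \n3{y}\leq \frac{1}{\varepsilon}\Vert y\Vert$, so $\n3{\cdot}$ is an equivalent norm whose closed unit ball is $B$. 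Strong regularity is an isomorphic property, so $Y$ and $Y^{**}$ remain SR by (1). In $Y^{**}$ the bidual unit ball is $\overline{B}^{w^*}=\overline{L}^{w^*}+\varepsilon B_{Y^{**}}$. Indeed, the right-hand side is a sum of two $w^*$-compact sets, hence $w^*$-compact, and it contains $B$ and lies inside $\overline{B}^{w^*}$.

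Now let $W$ be a non-empty relatively $w^*$-open subset of $\overline{B}^{w^*}$. Choose $z=u+\varepsilon v\in W$ with $u\in\overline{L}^{w^*}$ and $v\in B_{Y^{**}}$. The addition map $\overline{L}^{w^*}\to\overline{B}^{w^*}$, $u'\mapsto u'+\varepsilon v$, is $w^*$-continuous. Hence $V:=\{u'\in\overline{L}^{w^*}: u'+\varepsilon v\in W\}$ is a non-empty relatively $w^*$-open subset of $\overline{L}^{w^*}$, and $V+\varepsilon v\subseteq W$.

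\textbf{Radius.} Given $x\in Y^{**}$, property (2) produces $u'\in V$ with $\Vert (x-\varepsilon v)-u'\Vert\geq 1-\delta$. Then $w:=u'+\varepsilon v\in W$ satisfies $\Vert x-w\Vert\geq 1-\delta$. Since $\n3{\cdot}\geq\frac{1}{1+\varepsilon}\Vert\cdot\Vert$, we get $\n3{x-w}\geq\frac{1-\delta}{1+\varepsilon}$, which gives radius at least $\frac1{1+\varepsilon}$.

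\textbf{Diameter.} This is the main obstacle, because (3) is only stated for weakly open subsets of $L$ and not for $w^*$-open subsets of $\overline{L}^{w^*}$. First I would use the radius bound: radius at least $r$ forces diameter at least $r$, but that only yields $\frac1{1+\varepsilon}$. To get $\frac{2^{1/p}}{1+\varepsilon}$ I would argue as follows. The set $V\cap L$ is $w^*$-dense in $V$ by Goldstine-type density, since $L$ is $w^*$-dense in $\overline{L}^{w^*}$ and $V$ is relatively open. Shrinking $W$, we may assume $v\in B_Y$ (by $w^*$-density of $B_Y$ in $B_{Y^{**}}$), so that the shifted set $\{y\in L: y+\varepsilon v\in W\}$ is a non-empty relatively weakly open subset of $L$. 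By (3) it contains points $y_1,y_2$ with $\Vert y_1-y_2\Vert\geq 2^{1/p}-\delta$. Then $y_i+\varepsilon v\in W$, and their difference has $\n3{\cdot}$-norm at least $\frac{2^{1/p}-\delta}{1+\varepsilon}$. If this bidual shortcut fails, I would fall back to weakly open subsets of the new ball of $Y$, where the same translation argument works verbatim.

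For the final sentence, choose $p$ with $2^{1/p}>2-\varepsilon/2$, i.e. $p$ close to $1$, and then $\varepsilon'$ small enough that $\frac{2^{1/p}}{1+\varepsilon'}\geq 2-\varepsilon$. Apply the above with $\varepsilon'$ and restrict to relatively weakly open subsets of $B_X$, where $X=(Y,\n3{\cdot})$; these are traces of $w^*$-open subsets of the bidual ball, so the translation argument above applies directly.
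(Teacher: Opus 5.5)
Your proof is correct and follows the paper's route. Your ball $\overline{L+\varepsilon B_Y}$ is just $(1+\varepsilon)$ times the paper's $\frac{1}{1+\varepsilon}(L+\varepsilon B_Y)$, so the two renormed spaces are isometric, and your radius bound is the same translation argument through Proposition~\ref{prop:improbidual}. For the diameter, which the paper only delegates to the proof of \cite[Theorem 3.3]{blr17}, your argument is valid as it stands: pick a point $u+\varepsilon v$ of your open set $W$ with $u\in L$ and $v\in B_Y$, using $w^*$-density of $L+\varepsilon B_Y$, then apply (3) to the relatively weakly open set $\{y\in L: y+\varepsilon v\in W\}$. So the fallback you mention is not needed.
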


All the sections of the present paper, with the exception of the last one, are devoted to the construction of $Y$ and $L$ in Theorem~\ref{theo:maintheorem}. Since the construction is rather involved, let us describe the content of the sections of the paper by giving a short explanation of the constructions of $Y$ and $L$.

The construction of the space $Y$ in Theorem~\ref{theo:maintheorem} is strongly inspired by the construction of the space $S_*T_\infty$ in \cite[Theorem VI.1]{gms} of a Banach space failing the CPCP and being SR.

The construction in \cite[Section VI]{gms} starts by considering $T_\infty$ the infinite tree $\bigcup\limits_{n\in\mathbb N\cup\{0\}}\mathbb N^n$ and consider $\phi$ to be the origin of the tree. Consider $T_\infty^n:=\{t\in T_\infty: \vert t\vert=n\}$, where $\vert t\vert$ stands for the level in the tree. Then they consider $X:=\left( \oplus_{n=0}^\infty \ell_2(T_\infty^n)\right)_{c_0}$, and find a suitable $w^*$-compact and convex subset $K\subseteq X^{**}$ which satisfies several properties among which they highlight that $K$ is SR but fails the CPCP. 

In Section~\ref{section:weird} we will consider, for $1<p<\infty$, a similar variation of set $K$ constructed in $X:=\left( \oplus_{n=0}^\infty \ell_p(T_\infty^n)\right)_{c_0}$. In such section we will prove that $W:=\frac{K\cap X-K\cap X}{2^\frac{1}{p}}\subseteq B_X$ is strongly regular (here the argument is similar to that of \cite[Section VI]{gms}), that every non-empty relatively $w^*$ open subset of $K$ in $X^{**}$ has radius $1$ (Theorem~\ref{theo:w*openbidual}) and that every non-empty relatively weakly open subset of $K\cap X$ has diameter at least $2^\frac{1}{p}$ (Theorem~\ref{theo:weakopenconjuinter}). Furthermore, we prove that if we consider $Y:=\overline{\spann}(K)\subseteq Y^{**}$, then $Y/X$ is reflexive (here the proof differs from that of \cite[Section VI]{gms}, where the special case $p=2$ permits the authors proving that $Y/X$ is a Hilbert space by finding a suitable scalar product). 
 
Going back to the construction of \cite[Section VI]{gms}, once $X$ and $K$ are constructed there, then $S_*T_\infty$ is originated after applying the interpolation procedure from \cite{dfjp} to $U:=\overline{\conv}(K\cap X\cup-K\cap X)$ and $X$: for every $n\in\mathbb N$ consider $\Vert\cdot\Vert_n$ the (equivalent) norm on $X$ whose unit ball is 
$$U_n:=2^n U+\frac{1}{2^n}B_X,$$
and then 
$$S_*T_\infty=\left\{x\in X: \vert x\vert=\left(\sum_{n=1}^\infty \Vert x\Vert_n \right)^\frac{1}{2}<\infty \right\}.$$

In the proof of Theorem~\ref{theo:maintheorem}, our construction of $Y$ will be a variation of the above argument: once $1<p<\infty$ is fixed and $X$ and $W$ is constructed, for every $n\in\mathbb N$ define $\Vert\cdot\Vert_n$ to be the norm associated to the Minkowski functional of the set
$$B_n:=\overline{\conv}\left(2^n W\cup \frac{1}{2^n}B_X\right).$$
Define $Y:=\left\{x\in X: \sum_{n=1}^\infty \Vert x\Vert_n^2<\infty\right\}$, which is a vector subspace of $X$ and define on $Y$ the following norm
$$\vert\Vert x\Vert\vert:=\left(\sum_{n=1}^\infty \Vert x\Vert_n^2 \right)^\frac{1}{2}.$$
In our construction of $Y$ the modification of the interpolation process is necessary because we do want to keep geometric properties of $W$ under the formal inclusion $i:Y\longrightarrow X$, so we need sharper estimates  for the norm of $i$ and for the radius of the set $i(W)$. This modification, on the other hand, forces us to prove several results for this scheme of interpolation, which are variations of results proved in \cite{dfjp} for the original interpolation process (with similar proofs). All this work is performed in Section~\ref{section:interpola}. 

All the results for $Y$ and $L:=\frac{3}{\sqrt{3}}W\subseteq B_Y$ are proved along Section~\ref{sect:finalresultingsect}. In Theorem~\ref{theo:espafuerteregular} we prove that $Y^{**}$ (and hence $Y$) is strongly regular. Then, we prove in Theorem~\ref{theo:abidebigrandesL} that every non-empty relatively weakly open subset of $L$ has radius $1$ and diameter, at least, $2^\frac{1}{p}$. Indeed, we prove in Theorem~\ref{prop:improbidual} that every non-empty relatively $w^*$ open subset of $\overline{L}^{w^*}$ in $Y^{**}$ has radius one. All this proves Theorem~\ref{theo:maintheorem}. After this, we point out the proof of Theorem~\ref{theo:renormainejem}.

All this constitutes a major progress in the search of an affirmative answer to Question~\ref{ques:ejeSRwithrBWOP} which, as far as we know, is an open question. We conclude the paper with a possible idea for getting a positive solution by involving the use of Tauberian operators and an application of a new interpolation process to the bidual $Y^{**}$ of the resulting space $Y$ from Theorem~\ref{theo:maintheorem} and using some ideas from \cite[Proposition 2.5]{ll21}.

\textbf{Terminology:} We will only consider real Banach spaces. Given a Banach space $X$ we denote by $B_X$ and $S_X$ the unit ball and the unit sphere of $X$ respectively, and we will denote by $X^*$ the topological dual of $X$. 

\section{A strongly regular set with big weak open sets}\label{section:weird}

Our aim is to consider a variation of the construction performed in \cite[p. 578]{gms} of a set which is strongly regular but fails the CPCP. To this end we need a bit of notation.

Throughout the section $1<p<\infty$ will be fixed. 

We consider $T_\infty$ the infinite tree $\bigcup\limits_{n\in\mathbb N}\mathbb N^n$ and consider $\phi$ to be the origin of the tree. Consider $T_\infty^n:=\{t\in T_\infty: \vert t\vert=n\}$, where $\vert t\vert$ stands for the level in the tree. Given $t=(n_1,\ldots, n_k)\in T_\infty$ and $p\in\mathbb N$ we define $t\wedge p:=(n_1,\ldots, n_k, p)$.

Observe that there exists a natural order on $T$: $(n_1,\ldots, n_k)\leq (m_1,\ldots, m_p)$ if $k\leq p$ and $n_i=m_i$ holds for $1\leq i\leq k$. By a \textit{branch} in $T_\infty$ we mean a totally bounded and infinite subset of $T_\infty$.

Consider $X:=\left(\oplus_{n=1}^\infty \ell_p(T_\infty^n)\right)_{c_0}$ and $K\subseteq X^{**}=(\oplus_{n=1}^\infty \ell_p(T_\infty^n))_{\ell_\infty}$ be the set of all the elements $(z_t)_{t\in T_\infty}$ such that 
\begin{enumerate}
    \item $z_\phi=1$
    \item $z_t\geq 0$ holds for every $t\in T_\infty$ and,
    \item $z^p_{(n_1,\ldots, n_k)}\geq \sum_{n_{k+1}=1}^\infty z^p_{(n_1,\ldots, n_k, n_{k+1})}$
\end{enumerate}

Let $Y:=\overline{\spann}(K)\subseteq X^{**}$. Observe that $X\subseteq Y$ since $K\cap X$ is dense in $X$. Indeed, given any $t\in T_\infty$, we have that $e_t=\sum_{s\leq t} e_s-\sum_{s<t} e_s\in Y$ for every $t\in T_\infty$. 

Consequently we can consider the quotient space $Y/X$. 

\begin{proposition}\label{prop:cocienteprincipio}
Given $y+X\in Y/X$ we have that
$$\Vert y+X\Vert=\lim\limits_{n\rightarrow \infty} \left(\sum_{\vert t\vert=n}\vert y(t)\vert^p \right)^\frac{1}{p}.$$
\end{proposition}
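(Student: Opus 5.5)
Set $N_n(y):=\left(\sum_{\vert t\vert=n}\vert y(t)\vert^p\right)^{1/p}$ for $y\in X^{**}=(\oplus_n\ell_p(T_\infty^n))_{\ell_\infty}$. The norm of $X^{**}$ is $\sup_n N_n(y)$, and $X$ is exactly the set of $y$ with $N_n(y)\to 0$. The plan has three steps: show that $N_n(y)$ converges for every $y\in Y$, and then prove the two inequalities between $\Vert y+X\Vert$ and $\lim_n N_n(y)$.

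\textbf{Step 1: existence of the limit.} For $z\in K$, condition (3) gives $N_{n+1}(z)\le N_n(z)$ by summing over the nodes of level $n$, and condition (2) gives $N_n(z)\ge 0$. So $N_n(z)$ converges for $z\in K$. For a general $y\in Y$ this is not automatic, since $N_n$ is not additive and monotonicity is lost for combinations such as $z_1-z_2$. I would first try to show that for $z_1,\dots,z_m\in K$ and scalars $a_i$ the quantity $N_n(\sum_i a_iz_i)$ converges. The idea is to use the tree structure: the mass of each $z_i$ below a node $s$ is nonincreasing along the levels, and refining to a deep level $n_0$ splits everything into finitely many nodes whose subtrees carry almost all of the $\ell_p$ mass. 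The main obstacle is when the $z_i$ overlap inside the same subtrees. If Step 1 for combinations resists, I would instead prove directly that $\limsup_n N_n(y)\le\Vert y+X\Vert\le\liminf_n N_n(y)$ for every $y\in Y$; this gives existence of the limit and the formula at the same time, and it is the route I would commit to. The existence for single $z\in K$ then serves only as motivation.

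\textbf{Step 2: $\limsup_n N_n(y)\le \Vert y+X\Vert$.} This is the easy direction. For any $x\in X$ and any $n$, the triangle inequality in $\ell_p(T_\infty^n)$ gives $N_n(y)\le N_n(y-x)+N_n(x)\le \Vert y-x\Vert+N_n(x)$. Since $N_n(x)\to 0$, we get $\limsup_n N_n(y)\le\Vert y-x\Vert$. Taking the infimum over $x\in X$ gives the claim.

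\textbf{Step 3: $\Vert y+X\Vert\le\liminf_n N_n(y)$.} Take the truncation $x_n:=y\mathbbm{1}_{\{\vert t\vert\le n\}}$, meaning we keep levels $1,\dots,n$ and delete the rest. We have $x_n\in X$, because each level-block lies in $\ell_p$ and only finitely many levels are nonzero. Then $\Vert y-x_n\Vert=\sup_{m>n}N_m(y)$, so $\Vert y+X\Vert\le\limsup_m N_m(y)$. Combined with Step 2 this proves the formula, provided the limit of $N_n(y)$ exists. So the argument really rests on showing $\limsup_n N_n(y)=\liminf_n N_n(y)$ for $y\in Y$, and this is where I expect the work to be.

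To handle it, I would approximate $y$ in norm by $y'=\sum_i a_iz_i$ with $z_i\in K$. Because $\vert N_n(y)-N_n(y')\vert\le\Vert y-y'\Vert$ uniformly in $n$, it is enough to prove convergence of $N_n(y')$ for finite combinations. For these I would use the following. Fix $\varepsilon>0$ and choose $n_0$ such that each $N_n(z_i)$ is within $\varepsilon$ of its limit for $n\ge n_0$. Then, for $n\ge m\ge n_0$ and each level-$m$ node $s$, the level-$n$ mass of $z_i$ below $s$ differs from $z_i(s)^p$ by at most a controlled amount, summed over $s$. This follows from the monotone inequality (3) applied node by node: the total loss is at most $N_m(z_i)^p-N_n(z_i)^p$, which is small. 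So the vector $\left(z_i\text{ restricted below }s\right)$ behaves like $z_i(s)$ times a fixed profile, up to small error. Getting a common profile for the different $i$ needs a convexity estimate for $\ell_p$ norms, and this is the delicate point. For $p=2$ it is an inner-product identity; for general $p$ I would use uniform convexity of $\ell_p$ (Clarkson) to force near-proportionality. From this one deduces $N_n(y')^p\approx\sum_{\vert s\vert=m}\vert y'(s)\vert^p$ up to $O(\varepsilon)$ for all $n\ge m\ge n_0$. Hence $N_n(y')$ is Cauchy, and the limit exists.
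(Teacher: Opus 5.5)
Your Steps 2 and 3 are the paper's proof. Truncating at level $m$ gives $\Vert y+X\Vert\le\limsup_n N_n(y)$. Comparing with $x\in X$ gives the reverse bound; the paper takes $x$ finitely supported, you use $N_n(x)\to 0$, which amounts to the same. So $\Vert y+X\Vert=\limsup_n N_n(y)$ holds unconditionally, and the proposition reduces to existence of the limit on $Y$. The paper only asserts this, inferring it from monotonicity on $K$. That inference does not pass to $\spann(K)$, because $N_n$ is not additive, so you are right to isolate it.

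However, your argument for existence fails exactly at the point you call delicate. You choose $n_0$ only so that each $N_n(z_i)$ has stabilised, and claim $N_n(y')^p\approx N_m(y')^p$ for all $n\ge m\ge n_0$. That claim is false. Uniform convexity cannot rescue it, because nothing in your hypotheses couples the profiles of different $z_i$.

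Here is a counterexample. Let $z_1,z_2\in K$ take the value $1$ along a common branch up to a node $s$ of level $m$, then continue with value $1$ along two different branches through $s\wedge 1$ and $s\wedge 2$, and be $0$ elsewhere. Condition (3) holds with equality, so $N_n(z_i)=1$ for every $n$, and $n_0=1$ meets your requirement for every $\varepsilon$. Yet $N_m(z_1-z_2)=0$, while $N_n(z_1-z_2)=2^{1/p}$ for all $n>m$.

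The missing ingredient is the convexity of $K$. One repair of your route is to also demand that $\bar z=\frac1k\sum_i z_i\in K$ has stabilised; in the example, $\bar z$ loses mass exactly at level $m+1$. Near-equality in $\Vert\frac1k\sum_i u_i\Vert_p\le\frac1k\sum_i\Vert u_i\Vert_p$ is then what uniform convexity turns into near-proportionality, though the bookkeeping is messy.

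A simpler repair is to aim for almost-monotonicity rather than near-constancy.

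1. Group positive and negative coefficients and use convexity to write $y'=Aw-Bw'$ with $A,B\ge 0$ and $w,w'\in K$.
2. Choose $n_0$ so that $N_m(w)^p-N_n(w)^p\le\varepsilon$, and likewise for $w'$, whenever $n\ge m\ge n_0$.
3. For $\vert s\vert=m$, let $c_s$ and $d_s$ be the $\ell_p$-norms of $w$ and $w'$ on the level-$n$ descendants of $s$. Iterating (3) gives $c_s\le w(s)$ and $\sum_s (w(s)^p-c_s^p)\le\varepsilon$.
4. Since $(a-c)^p\le a^p-c^p$ for $0\le c\le a$, this yields $\Vert (w(s)-c_s)_s\Vert_p\le\varepsilon^{1/p}$, and the same holds for $w'$ and $d_s$.
5. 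The reverse triangle inequality on each subtree below $s$ then gives
$$N_n(y')\ge \Vert (Ac_s-Bd_s)_{\vert s\vert=m}\Vert_p\ge N_m(y')-(A+B)\varepsilon^{1/p}.$$

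Hence $\liminf_n N_n(y')\ge\limsup_m N_m(y')-(A+B)\varepsilon^{1/p}$ for every $\varepsilon>0$, so the limit exists on $\spann(K)$. Your uniform estimate $\vert N_n(y)-N_n(y')\vert\le\Vert y-y'\Vert$ then carries it to $Y$. No uniform convexity is needed.
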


Observe that the above limit is well defined on $K$ since, for $x\in K$, the sequence $(\sum_{\vert t\vert=n}\vert x_t\vert^p)$ is decreasing in $n$ by condition (3) in the definition. Consequently the limit is well defined on $\spann (K)$ and, by continuity, on $Y$. 

\begin{proof}
Given $y+X\in Y/X$ then, by definition, $\Vert y+X\Vert=d(y,X)$. 

To begin with, let us prove that $d(y,X)\leq \lim\limits_{n\rightarrow \infty} \left(\sum_{\vert t\vert=n}\vert y(t)\vert^p \right)^\frac{1}{p}=:L$. In order to do so select $\varepsilon>0$. By the definition of the limit there exists $m\in \mathbb N$ such that $n\geq m$ implies
$$\left\vert \left(\sum_{\vert t\vert=n}\vert y(t)\vert^p\right)^\frac{1}{p}-L\right\vert<\varepsilon.$$
Now define $x(t)=y(t)$ if $\vert t\vert\leq m$ and $x(t)=0$ if $\vert t\vert>m$. It is clear that $x\in X$, so $d(y,X)\leq \Vert y-x\Vert$. Moreover, since $x(t)-y(t)=0$ holds if $\vert t\vert\leq m$ we infer that
\[
\begin{split}
\Vert x-y\Vert=\sup_{n\in\mathbb N} \left(\sum_{\vert t\vert=n}\vert x(t)-y(t)\vert^p\right)^\frac{1}{p}& =\sup_{n\geq m+1}\left(\sum_{\vert t\vert=n}\vert y(t)\vert^p\right)^\frac{1}{p}\\
& \leq L+\sup_{n\geq m+1}\left\vert \left(\sum_{\vert t\vert=n}\vert y(t)\vert^p\right)^\frac{1}{p}-L\right\vert\\
& \leq L+\varepsilon.
\end{split}
\]
Consequently we get $\Vert y+X\Vert\leq L+\varepsilon$ and the arbitrariness of $\varepsilon$ implies $\Vert y+X\Vert\leq L$.

To prove the reverse inequality, let $\varepsilon>0$. Since $\Vert y+X\Vert=d(y,X)$ find $x\in X$ such that $\Vert y+X\Vert+\varepsilon\geq \Vert y-x\Vert$. We can assume with no loss of generality that $x$ is finitely supported by a density argument. Now
\[
\begin{split}
L=\lim_{n\rightarrow \infty}\left(\sum_{\vert t\vert=n} \vert y(t)\vert^p\right)^\frac{1}{p}& =\lim_{n\rightarrow \infty}\left(\sum_{\vert t\vert=n} \vert y(t)-x(t)\vert^p\right)^\frac{1}{p}\\
& \leq \sup_{n\in\mathbb N}\left(\sum_{\vert t\vert=n} \vert y(t)-x(t)\vert^p\right)^\frac{1}{p}\\
& =\Vert y-x\Vert\leq \Vert y+X\Vert+\varepsilon,
\end{split}
\]
and once again the arbitrariness of $\varepsilon$ yields $L\leq \Vert y+X\Vert$ and the equality is proved.
\end{proof}

Let us prove that $Y/X$ is a reflexive space. In order to do so, consider
$$\Gamma:=\{\gamma\subseteq T_\infty: \gamma\mbox{ is a branch }\}.$$

\begin{theorem}\label{theo:cocientereflexivo}
The mapping $\phi:Y/X\longrightarrow \ell_p(\Gamma)$ defined by
$$\phi(y+X)(\gamma):=\lim_{t\in\gamma} y(t)$$
is an into isometry. In particular, $Y/X$ is a reflexive Banach space.
\end{theorem}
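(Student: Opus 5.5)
The plan is to first make $\phi$ well defined on $Y$, then show it vanishes on $X$, then compare norms using Proposition~\ref{prop:cocienteprincipio}, and finally deduce reflexivity.

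\emph{Existence of the branch limits.} For $z\in K$ and a branch $\gamma=\{t_1<t_2<\cdots\}$, condition (3) gives $z_{t_{k+1}}^p\leq z_{t_k}^p$. Hence $(z_{t_k})_k$ is non-negative and decreasing, and $\lim_{t\in\gamma}z(t)$ exists. By linearity the limit exists for every $y\in\spann(K)$. Since $|y(t)|\leq\Vert y\Vert_{X^{**}}$, the functional $y\mapsto\lim_{t\in\gamma}y(t)$ is $1$-Lipschitz on $\spann(K)$ whenever it is defined. So the set where it exists is closed, and the limit exists for every $y\in Y$. If $x\in X$, then $\sum_{|t|=n}|x(t)|^p\to0$, so $x(t)\to0$ along every branch. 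Thus $\phi$ vanishes on $X$ and is a well-defined linear map on $Y/X$.

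\emph{The inequality $\Vert\phi(y+X)\Vert_p\leq\Vert y+X\Vert$.} Fix finitely many distinct branches $\gamma_1,\dots,\gamma_N$. Two distinct branches share only an initial segment, so for $n$ large the nodes $t_i^n\in\gamma_i\cap T_\infty^n$ are pairwise distinct. Therefore
$$\sum_{i=1}^N|y(t_i^n)|^p\leq\sum_{|t|=n}|y(t)|^p .$$
Letting $n\to\infty$, the left side tends to $\sum_{i}|\phi(y+X)(\gamma_i)|^p$. The right side tends to $\Vert y+X\Vert^p$ by Proposition~\ref{prop:cocienteprincipio}. Taking the supremum over finite families gives $\phi(y+X)\in\ell_p(\Gamma)$ with norm at most $\Vert y+X\Vert$, so $\phi$ is bounded.

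\emph{The reverse inequality: the main obstacle.} Here I would try to show that for $z\in K$ the mass $\lim_n\sum_{|t|=n}z_t^p$ is carried by individual branches. The intended tool is a martingale or measure argument: define the measure on the branch space $\Gamma$ induced by the weights $z_t^p$, using the super-additivity in (3). Then one would hope to show that the limiting mass equals the sum of the atoms, namely $\sum_\gamma\lim_{t\in\gamma}z_t^p$. Afterwards one would pass to $\spann(K)$ and then to $Y$ by density.

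I do not expect this step to work for the set $K$ exactly as it is defined, because there seems to be a counterexample. Take $z_\phi=1$ and, on a binary subtree, let every node at level $n$ have $z_t^p=2^{-n}$, with all other coordinates $0$. This element satisfies (1)--(3), so it lies in $K$. Its level sums are all equal to $1$, so $\Vert z+X\Vert=1$ by Proposition~\ref{prop:cocienteprincipio}. Yet every branch limit is $0$, so $\phi(z+X)=0$. Before going further I would therefore re-check the definition of $K$. In particular, I would ask whether the intended set forces the mass to concentrate on branches, for instance by being the closed convex hull of the elements supported on single branches, as in \cite{gms}. The reverse inequality should be proved for that corrected set.

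\emph{Reflexivity.} Once $\phi$ is an isometric embedding, $Y/X$ is isometric to a closed subspace of $\ell_p(\Gamma)$ with $1<p<\infty$. It is therefore reflexive, which is the ``in particular'' part of Theorem~\ref{theo:cocientereflexivo}.
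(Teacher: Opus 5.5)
Your counterexample is correct. The statement therefore cannot be proved as written, and you were right to stop there.

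Take $z$ with $z_t=2^{-\vert t\vert/p}$ on the binary subtree $\bigcup_n\{1,2\}^n$ and $z_t=0$ elsewhere. It satisfies (1)--(3), with equality in (3), so $z\in K\subseteq Y$, and every branch limit of $z$ is $0$. Moreover $d(z,X)=1$, which does not even need Proposition~\ref{prop:cocienteprincipio}: for $x\in X$ and every $n$,
\[\Vert z-x\Vert\geq \Vert z|_{T_\infty^n}\Vert_p-\Vert x|_{T_\infty^n}\Vert_p=1-\Vert x|_{T_\infty^n}\Vert_p\longrightarrow 1 .\]
So $z+X$ is a norm-one element of $\ker(\phi)$.

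The paper's proof breaks exactly here. It takes injectivity of $\phi$ from Lemma~\ref{lemma:injephi}, whose claim $\ker(\varphi)=X$ is only sketched by appeal to adaptations of arguments from other settings. Your $z$ lies in $\ker(\varphi)\setminus X$. The paper's other steps are fine: the bound $\Vert\phi\Vert\leq 1$, which you reproduce, and the lifting $\sum_n z(\gamma_n)\chi_{\gamma_n}+X$, which makes $\phi$ onto with norm control. Without injectivity, however, they give only a norm-one surjection. For the same reason your reflexivity paragraph, which is conditional on the isometry, does not go through.

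What is worth saving is the ``in particular'', since that is all the paper uses later (strong regularity of $K$ and Lemma~\ref{lemma:cocibidualporespacio}). Your measure idea does this if you change the target space rather than $K$.

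For $y\in K$ put $\nu_y([t]):=\lim_m\sum_{s\geq t,\,\vert s\vert=m}y_s^p$. By dominated convergence this is additive along the tree, so by Kolmogorov it extends to a Borel measure of mass at most $1$ on $\mathbb N^{\mathbb N}$. One checks $\lim_{t\in\gamma}y_t^p=\nu_y(\{\gamma\})$, so branch limits only see the atoms. Your $z$ produces the non-atomic coin-tossing measure on $\{1,2\}^{\mathbb N}$.

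Since $(a-b)^p\leq a^p-b^p$ for $a\geq b\geq 0$, one gets
\[\sum_{\vert t\vert=n}\vert y_t-\nu_y([t])^{1/p}\vert^p\leq \sum_{\vert t\vert=n}y_t^p-\nu_y(\mathbb N^{\mathbb N})\longrightarrow 0 .\]
Martingale convergence along the level $\sigma$-algebras then gives, for $y^{(1)},\ldots,y^{(k)}\in K$ and any finite measure $\lambda$ dominating the $\nu_{y^{(i)}}$,
\[\Big\Vert \sum_{i=1}^k a_iy^{(i)}+X\Big\Vert=\Big\Vert \sum_{i=1}^k a_i\Big(\frac{d\nu_{y^{(i)}}}{d\lambda}\Big)^{1/p}\Big\Vert_{L_p(\lambda)} .\]
Incidentally, this is also what justifies the existence of the limit in Proposition~\ref{prop:cocienteprincipio} on $\spann(K)$; that does not follow from monotonicity on $K$ alone.

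For a countable family take $\lambda=\sum_i 2^{-i}\nu_{y^{(i)}}$. Then every separable subspace of $Y/X$ embeds isometrically into some $L_p(\lambda)$, so $Y/X$ is reflexive. What fails is only the specific claim that branch limits give an isometry into $\ell_p(\Gamma)$.
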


In order to do so we need the following lemma.

\begin{lemma}\label{lemma:injephi}
Let $\varphi:Y\longrightarrow \ell_p(\Gamma)$ defined by
$$\varphi(y)(\gamma):=\lim_{t\in\gamma} y(t),$$
then $\varphi$ is a well defined, linear and continuous mapping such that
$$\ker(\varphi)=X.$$
\end{lemma}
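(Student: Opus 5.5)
The plan is to establish everything first on $K$ and on $\spann(K)$, and then pass to $Y=\overline{\spann}(K)$ through a uniform norm estimate.

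\textbf{Step 1: $\varphi$ on $K$.} Fix $z\in K$ and a branch $\gamma=\{t_0=\phi<t_1<t_2<\cdots\}$. Condition (3) gives $z_{t_{k+1}}^p\leq z_{t_k}^p$. Since $z_t\geq 0$, the sequence $(z_{t_k})_k$ is non-increasing and nonnegative, so $\lim_{t\in\gamma}z(t)$ exists. By linearity, the limit then exists along every branch for every $y\in\spann(K)$, and $\varphi$ is linear on $\spann(K)$.

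\textbf{Step 2: the key estimate.} For $y\in\spann(K)$ we want
$$\Vert\varphi(y)\Vert_p\leq \lim_n\Big(\sum_{|t|=n}|y(t)|^p\Big)^{1/p}\leq \Vert y\Vert.$$
Fix finitely many branches $\gamma_1,\ldots,\gamma_m$. They are distinct, so for $n$ large their level-$n$ nodes $t_i^{(n)}$ are pairwise distinct. Hence
$$\sum_{i=1}^m |y(t_i^{(n)})|^p\leq \sum_{|t|=n}|y(t)|^p.$$
Letting $n\to\infty$ gives $\sum_i |\varphi(y)(\gamma_i)|^p\leq \lim_n(\cdots)$; the limit on the right exists by the remark after Proposition~\ref{prop:cocienteprincipio}. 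Taking the supremum over finite families gives the bound. So $\varphi$ is a contraction on $\spann(K)$.

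\textbf{Step 3: extension to $Y$.} Take $y\in Y$ and $y_k\in\spann(K)$ with $y_k\to y$. Coordinatewise convergence is uniform in $t$, because $|y(t)|\leq\Vert y\Vert$. Therefore the limits along each branch still exist for $y$, and $\varphi(y)(\gamma)=\lim_k\varphi(y_k)(\gamma)$. The $\varphi(y_k)$ form a Cauchy sequence in $\ell_p(\Gamma)$, so $\varphi(y)\in\ell_p(\Gamma)$ and $\varphi$ is well defined, linear and bounded with $\Vert\varphi\Vert\leq1$. In fact the inequality $\Vert\varphi(y)\Vert_p\leq \Vert y+X\Vert$ also passes to the limit, using Proposition~\ref{prop:cocienteprincipio}.

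\textbf{Step 4: $X\subseteq\ker\varphi$.} If $x\in X$, then $\sup_{|t|=n}|x(t)|\to0$ as $n\to\infty$. So every branch limit is zero and $X\subseteq\ker\varphi$.

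\textbf{Step 5: $\ker\varphi\subseteq X$, the main obstacle.} Let $y\in\ker\varphi$. By Proposition~\ref{prop:cocienteprincipio} it suffices to show that $\lim_n\sum_{|t|=n}|y(t)|^p=0$. Put $L:=\lim_n\big(\sum_{|t|=n}|y(t)|^p\big)^{1/p}$. The aim is the reverse inequality $L\leq\Vert\varphi(y)\Vert_p$, which would also give the isometry in Theorem~\ref{theo:cocientereflexivo}.

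I would first prove this for $z\in K$ and then extend it by approximation. For $z\in K$, the level-$n$ $\ell_p$ mass is non-increasing. I would show that the mass which ``escapes'' into an infinite spread of nodes tends to zero, rather than concentrating on finitely many branches. Concretely, fix $\varepsilon>0$ and a level $n_0$. For each node $t$ at level $n_0$, the sequence of masses of its descendants at level $n$ is non-increasing in $n$, so it converges. The plan is to produce finitely many branches whose limits $z(\gamma)^p$ sum to at least $L^p-\varepsilon$.

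I do not expect this to be true for a general element of $K$. Mass can split evenly: at each level, the $2^n$ nodes carry $2^{-n/p}$ each. Then every branch limit is $0$ while $L=1$, so such a $z$ lies in $\ker\varphi\setminus X$. That would make the lemma false as stated. Hence the first thing I would try is to check whether the intended $K$ (or an implicit convention, e.g. a Cantor-type quotient, or $\Gamma$ read with a limit over $w^*$-ultrafilters) rules out such splitting.

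If the literal statement survives, the route is the inequality $L\leq\Vert\varphi(y)\Vert_p$ via compactness of branch mass on $K$, followed by density.
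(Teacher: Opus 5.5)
Your Steps 1--4 are correct and are essentially the paper's own argument: monotonicity along branches for elements of $K$, the estimate over finitely many eventually disjoint branches giving $\Vert\varphi\Vert\le 1$, extension by density, and $X\subseteq\ker\varphi$. Your remark that coordinates converge uniformly in $t$ is a useful addition. It is what guarantees that the continuous extension to $Y$ is still given by branch limits, and the paper does not check this. In Step 2 you do not need the existence of $\lim_n(\sum_{\vert t\vert=n}\vert y(t)\vert^p)^{1/p}$ on $\spann(K)$. Its justification after Proposition~\ref{prop:cocienteprincipio} is too quick, because the level norms are not linear in $y$. Bounding by $\Vert y\Vert$, or by the $\liminf$, is enough.

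The real content is Step 5, and there you should stop hedging. Your example is a genuine counterexample, so $\ker\varphi\subseteq X$ fails and the lemma is false as stated. So is the injectivity, hence the isometry, asserted in Theorem~\ref{theo:cocientereflexivo}. Explicitly, let $z(\phi)=1$, $z(t)=2^{-\vert t\vert/p}$ for $t\in\bigcup_{n}\{1,2\}^n$, and $z(t)=0$ otherwise.
\begin{itemize}
\item $z\geq 0$ and condition (3) holds with equality at every node, so $z\in K\subseteq Y$. No convention or reinterpretation of $K$ or $\Gamma$ excludes it.
\item $\sum_{\vert t\vert=n}z(t)^p=1$ for every $n$, so $z\notin X$; indeed $d(z,X)=1$.
\item Along every branch $z(t_n)\in\{0,2^{-n/p}\}$, so $\varphi(z)=0$.
\end{itemize}
In particular your fallback route cannot succeed. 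You proposed proving $L\le\Vert\varphi(y)\Vert_p$ on $K$ by compactness of branch mass and then passing to $Y$, but $z$ is itself an element of $K$ with $L=1$ and $\Vert\varphi(z)\Vert_p=0$.

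The paper's sketch fails at the step where $y$ is approximated by $P_{T_1}y$ for a full subtree $T_1$ with finitely many branching points. Suppose these lie at levels $\le N$. Then distinct nodes of $T_1$ at a level $n>N$ have distinct ancestors at level $N+1$. Hence $T_1$ meets $\supp(z)$ in at most $2^{N+1}$ nodes of level $n$. It follows that $\sum_{\vert t\vert=n,\,t\notin T_1}z(t)^p\geq 1-2^{N+1-n}$, so $\Vert z-P_{T_1}z\Vert=1$ for every such $T_1$.

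Conceptually, for $z\in K$ the numbers $z(t)^p$ form a measure on the boundary of the tree, once you add at each node a sink child absorbing the mass lost in (3). Then $\varphi(z)(\gamma)^p$ is the mass of the atom $\{\gamma\}$, while $\Vert z+X\Vert^p$ is the total mass carried by the infinite branches of $T_\infty$. So $\varphi$ only detects the atomic part.

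What the paper uses afterwards (strong regularity of $K$, Lemma~\ref{lemma:cocibidualporespacio}) is only the reflexivity of $Y/X$, and that survives by a different argument. The level norms do converge on $Y$: this follows from martingale convergence in the boundary-measure picture, then density. So by Proposition~\ref{prop:cocienteprincipio} the map $y+X\mapsto[(y|_{T_\infty^n})_n]$ is an isometric embedding of $Y/X$ into an ultraproduct of the spaces $\ell_p(T_\infty^n)$. That ultraproduct is an $L_p(\mu)$-space, hence reflexive for $1<p<\infty$.
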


\begin{proof}
In order to show that $\varphi$ is well defined let us show that it is well defined on $\spann(K)$. Observe that if $y\in K$ then, by definition of $K$, given any $\gamma=\{t_1<t_2<\ldots\}\in\Gamma$, then the sequence $(y(t_n))_{n\in\mathbb N}$ is decreasing and bounded below (by 0), so the above sequence is convergent. Consequently, $\lim_{t\in \Gamma} y(t)=\lim\limits_n y(t_n)$ is well defined for every $y\in \spann(K)$ and every $\gamma\in\Gamma$. 

In order to prove that $\varphi$ is bounded on $\spann(K)$ take any $y\in \spann(K)$ and let any finite set $F=\{\gamma_1,\ldots, \gamma_q\}\subseteq \Gamma$. Since the above are diffe\-rent branches in $T_\infty$ there exists some level $m$ from which the branches are pairwise disjoint, i.e.
$$\gamma_i\cap \gamma_j\cap T_\infty^n=\emptyset \mbox{if } i\neq j, n\geq m.$$
So given any $n\geq m$ we have that
\[
\begin{split}
\left(\sum_{i=1}^q\sum_{t\in \gamma_i\cap T_\infty^n}\left\vert y(t)\right\vert^p  \right)^\frac{1}{p}\leq \left(\sum_{\vert t\vert=n}\vert y(t)\vert^p \right)^\frac{1}{p}&
 \leq \sup_{n\in\mathbb N}\left(\sum_{\vert t\vert=n} \vert y(t)\vert^p \right)^\frac{1}{p}=\Vert y\Vert.
\end{split}
\]
Since the above inequality holds true for $n\geq m$ we get that
\[
\begin{split}
\left( \sum_{i=1}^q \left\vert \lim\limits_{t\in\gamma_i}y(t)\right\vert^p \right)^\frac{1}{p}\leq \Vert y\Vert.
\end{split}
\]
This implies that $\varphi:\spann(K)\longrightarrow \ell_p(\Gamma)$ is a well defined, linear and bounded mapping with $\Vert \varphi\Vert\leq 1$. The completeness of $\ell_p(\Gamma)$ and the density of $\spann(K)$ allows to extend the above mapping to a bounded linear operator $\varphi:Y\longrightarrow \ell_p(\Gamma)$ such that $\Vert \varphi\Vert\leq 1$. 

So it remains to prove that $\ker(\varphi)=X$. The inclussion $\supseteq$ is immediate. In order to prove the reverse inclussion, let $y\in Y$ such that $d(y,X)\geq \varepsilon_0$. A variation of the argument in \cite[Lemma IV.2]{gm85} allows to find a subtree $T_1$ of $T_\infty$ with finitely many branching points which is full (i.e. if $S$ is a segment of $T_\infty$ then $S\cap T_1$ is a segment of $T_1$) and such that $\Vert y-P_{T_1}(y)\Vert<\frac{\varepsilon}{2}$, where $P_{T_1}: Y\longrightarrow \{y\in Y: y(t)=0\ \forall t\in T_\infty\setminus T_1\}$ is the natural projection. 

An adaptation of the technical argument in \cite[p. 12]{bpv19} (where it is performed by a diadyc tree) we conclude that $y\notin \ker(\varphi)$. 
\end{proof}

\begin{proof}[Proof of Theorem~\ref{theo:cocientereflexivo}]
Observe that the mapping $\phi$ is well defined and injective by Lemma~\ref{lemma:injephi}. Moreover $\Vert\phi\Vert\leq 1$. In order to conclude the result let us prove that $\phi$ is a quotient map.

In order to prove that $\phi$ is a quotient map select $z\in \ell_p(\Gamma)\setminus\{0\}$. Select $\supp(z)=\{\gamma_n:n\in\mathbb N\}\subseteq \Gamma$.

Now consider $y_n:=\chi_{\gamma_n}$ the characteristic function on $\gamma_n$. We claim that $y:=\sum_{n=1}^\infty z(\gamma_n)y_n+X$ is well defined in $Y/X$. Indeed, given $k\in\mathbb N$ there exists a level $q$ large enough to get that $\gamma_i\cap \gamma_j\cap \{t: \vert t\vert=q\}=\emptyset$ holds for $1\leq i,j\leq k$. It follows, by the expression of the norm on $Y/X$ described in Proposition~\ref{prop:cocienteprincipio} that
$$\left\Vert \sum_{i=1}^k z_i(\gamma_i) y_i+X \right\Vert=\left(\sum_{i=1}^k \vert z(\gamma_i)\vert^p\right)^\frac{1}{p}.$$
By the above argument it is clear that the sequence $\left\{\sum_{i=1}^k z(\gamma_i)y_i \right\}_{k\in\mathbb N}$ is Cauchy in $Y/X$ and, indeed,
$$\left\Vert \sum_{i=1}^k z(\gamma_i)y_i-\sum_{i=1}^{q}z(\gamma_i)y_i+X \right\Vert=\left(\sum_{i=k+1}^q \vert z(\gamma_i)\vert^p\right)^\frac{1}{p},$$
which clearly implies Cauchy condition. The completeness of $Y/X$ implies that there exists $y:=\sum_{n=1}^\infty z(\gamma_n)y_n\in Y/X$. Moreover it is immediate that $\phi(y)=z$ and that $\Vert y\Vert\leq \Vert z\Vert$, from where $\phi$ is an isometric quotient map.

\end{proof}

With the above result in mind we obtain the following result.

\begin{proposition}
$K$ is strongly regular.
\end{proposition}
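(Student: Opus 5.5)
We follow the scheme of \cite[Section VI]{gms}, combining the reflexivity of $Y/X$ from Theorem~\ref{theo:cocientereflexivo} with the fact that the $c_0$-sum $X$ of $\ell_p$ spaces is well behaved. Fix a closed convex subset $C\subseteq K$ (it suffices to treat $K$ and its closed convex subsets) and $\varepsilon>0$. Our goal is a convex combination of slices of $C$ with diameter at most $\varepsilon$.

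\textbf{Step 1 (the quotient).} Consider the quotient map $q:Y\to Y/X$. Since $Y/X$ is reflexive, it has the RNP. The set $\overline{q(C)}$ is bounded, closed and convex, so it has slices of arbitrarily small diameter. Because $K$ is $w^*$-compact and $q$ is $w^*$-$w$ continuous in the appropriate sense (the functionals on $Y/X$ come from $\ell_{p'}(\Gamma)$ through $\varphi$, and these are $w^*$-continuous on bounded pieces of $K$ built from coordinate limits along branches), we can pull a small slice $S(q(C),g,\alpha)$ back to a slice $S_1=S(C,g\circ q,\alpha)$. Then $q(S_1)$ has diameter less than $\varepsilon/3$. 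In other words, all elements of $S_1$ agree up to $\varepsilon/3$ modulo $X$, i.e.\ in their tail $\lim_n(\sum_{|t|=n}|y(t)|^p)^{1/p}$ by Proposition~\ref{prop:cocienteprincipio}.

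\textbf{Step 2 (the $X$-part).} We must still make the convex combination small in the first finitely many levels, and control the $X$-component. Fix $m$ such that the tail beyond level $m$ is governed by the quotient norm up to $\varepsilon/3$; this uses the monotonicity in condition (3), which makes the level norms decreasing. The restriction $P_m$ to levels $\le m$ maps into $(\oplus_{n\le m}\ell_p(T^n_\infty))_{\infty}$, which is reflexive. Hence $P_m(S_1)$ is a bounded convex set in a reflexive space and is dentable. The trick of \cite[Lemma II.1]{ggms}/Bourgain's lemma then gives a convex combination of slices $\sum\lambda_i S_i$, with each $S_i\subseteq S_1$ a slice of $C$, such that the $P_m$-images have diameter less than $\varepsilon/3$. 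To realise this we take slices of $C$ defined by functionals of the form $g\circ q+h\circ P_m$.

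\textbf{Step 3 (the main obstacle).} The hard part is the middle region: levels beyond $m$, where elements of $S_i$ may differ in $X$ even though they agree modulo $X$. Here we exploit the tree structure together with the $p$-convexity in condition (3). On a fixed level $n$, $\sum_{|t|=n}z_t^p$ is bounded by the level-$m$ mass. If two points of the convex combination differ substantially at level $n>m$, their averages redistribute mass. Averaging many slices, chosen by splitting along distinct successors as in \cite{gms}, reduces the $\ell_p$-norm of differences by a factor like $N^{1/p-1}$, since $p>1$. This is exactly where uniform convexity of $\ell_p$, replacing the Hilbert structure used in \cite{gms}, should make the diameter of convex combinations of slices shrink. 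I would first adapt the $p=2$ estimate of \cite{gms} verbatim, replacing the parallelogram law by Clarkson-type inequalities. Summing the three contributions gives diameter at most $\varepsilon$.
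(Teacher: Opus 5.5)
\textbf{There is a genuine gap, in Steps 2 and 3.}

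In Step 2 you cannot fix one level $m$ beyond which ``the tail is governed by the quotient norm'' for all relevant points at once. Proposition~\ref{prop:cocienteprincipio} says that, for each single $y\in Y$, the level norms $(\sum_{|t|=n}|y(t)|^p)^{1/p}$ converge to $\Vert y+X\Vert$. It gives no uniformity over the differences $a-b$ with $a,b$ ranging over an infinite slice. Moreover, the monotonicity coming from condition (3) holds for elements of $K$, not for their differences.

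This obstruction is real. Suppose $\eta\in K$ has slack $s=\eta_t^p-\sum_j\eta_{t\wedge j}^p>0$ at a node $t$ of level $m$. Replacing $\eta_{t\wedge k}$ by $(\eta_{t\wedge k}^p+s)^{1/p}$ gives points $\eta^{(k)}\in K$ with the following properties:
\begin{itemize}
\item $\eta^{(k)}$ has the same image as $\eta$ under $P_m$ and under $q$;
\item $\Vert\eta^{(k)}-\eta\Vert\to s^{1/p}$;
\item $\eta^{(k)}\to\eta$ weakly.
\end{itemize}
Hence every relatively weakly open subset of $K$ containing $\eta$ has diameter at least $s^{1/p}$, however small the oscillations of $q$ and $P_m$ on it. This covers every slice, and every set produced by Bourgain's lemma from functionals of the form $g\circ q+h\circ P_m$.

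So everything rests on Step 3, and Step 3 is a heuristic, not an argument. The slices are determined by functionals on an arbitrary closed convex $C\subseteq K$, so you cannot choose them ``by splitting along distinct successors''. Nothing forces the differences $s_i-s_i'$ coming from different slices to have disjoint supports, and that is exactly what an $N^{1/p-1}$ gain would need. If several $S_i$ contain points with slack at a common node, you can add the bump at the same $t\wedge k$ in each of them, and the average cancels nothing.

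Note also that $X$ contains $c_0$, where every convex combination of slices of the ball has diameter $2$. So no averaging or uniform-convexity estimate carried out inside $X$ can succeed without extra structural input. A smaller point: a slice of $C$ defined by $g\circ q+h\circ P_m$ need not lie inside $S_1$. Bourgain's lemma only gives $\sum_i\lambda_iS_i\subseteq U$ for a weakly open set $U$, not $S_i\subseteq S_1$.

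\textbf{The missing idea.} The paper uses Theorem~\ref{theo:cocientereflexivo} only to rule out $\ell_1$, not to build slices:
\begin{itemize}
\item $X^*$ is separable, so $X$ contains no copy of $\ell_1$.
\item $Y/X$ is reflexive, so it contains no copy of $\ell_1$ either.
\item Containing $\ell_1$ is a three-space property, so $Y=\overline{\spann}(K)$ contains no copy of $\ell_1$.
\item \cite[Proposition VI.3]{gms} derives strong regularity of a $w^*$-compact convex set from the absence of $\ell_1$ in its closed linear span, which gives the conclusion directly.
\end{itemize}
The $w^*$-compactness of $K$ in $X^{**}$, which you mention only in passing, is an essential hypothesis there.

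As the introduction recalls, in the case $p=2$ of \cite{gms} the Hilbert structure serves to show that $Y/X$ is a Hilbert space, hence $\ell_1$-free. It does not enter through a parallelogram-law estimate on diameters, so there is no such estimate to transfer via Clarkson-type inequalities.
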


\begin{proof}
Observe that $X$ does not contain $\ell_1$ isomorphically since $X^*$ is separable. On the other hand, $Y/X$ does not contain $\ell_1$ since $Y/X$ is reflexive by the above result.

Consequently, $Y$ does not contain $\ell_1$ since the property of containig $\ell_1$ is a three-space property (c.f. e.g. \cite[Theorem 3.2 d.]{cm}). Now \cite[Proposition VI.3]{gms} forces that $K$ is strongly regular. 
\end{proof}

Let $K_0:=K\cap X$. Let $\mathcal U$ be a non-empty weakly open subset of $K_0$. We can assume up to taking a smaller weakly open set that
$$\mathcal U=V_{A,\varepsilon}:=\left\{\xi\in K_0: \vert x(t)-\xi(t)\vert<\varepsilon\ \forall t\in A \right\},$$
where $A\subseteq T_\infty$ is finite and full (i.e. $t\in A$ and $s\leq t\Rightarrow s\in A$). Observe that we can make this assumption since $X^*=\left(\oplus_{n=1}^\infty \ell_{p^*}(T_\infty^n) \right)_1$.

A similar argument to the proof of \cite[Proposition VI.2.a)]{gms} allows to prove that, given $n\in\mathbb N$ such that $\vert t\vert<n$ holds for every $t\in A$, there exists $\xi\in \mathcal U$ such that 
$$\sum_{\vert t\vert=n} \vert \xi(t)\vert^p=1.$$

Now define the following set 
\begin{equation}\label{conjuntoW}W:=\frac{K_0-K_0}{2^\frac{1}{p}}.
\end{equation} $W$ is a symmetric closed convex subset of $B_X$. Now we have the following result.

\begin{theorem}\label{theo:weakopenconjuinter}
Let $\mathcal U$ be a non-empty relatively weakly open subset of $W$. Then:
\begin{enumerate}
    \item $\mathcal U$ has radius $1$.
    \item $\diam(\mathcal U)\geq 2^\frac{1}{p}.$
\end{enumerate}
\end{theorem}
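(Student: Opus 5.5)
We work inside a basic weakly open set of $W$. Since $X^*=\left(\oplus_{n}\ell_{p^*}(T^n_\infty)\right)_1$ and finitely supported functionals are norm dense there, we may assume
$$\mathcal U\supseteq\left\{\frac{\xi-\eta}{2^{1/p}}\in W:\ \xi,\eta\in K_0,\ \left\vert \frac{\xi(t)-\eta(t)}{2^{1/p}}-w_0(t)\right\vert<\varepsilon\ \forall t\in A\right\}$$
for some $w_0=\frac{\xi_0-\eta_0}{2^{1/p}}\in W$, a finite full set $A\subseteq T_\infty$ and $\varepsilon>0$. Perturbing $\xi_0,\eta_0$ slightly inside $K_0$, we may take them finitely supported. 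This step is routine. It uses that elements of $K_0$ can be truncated: setting coordinates beyond a level to $0$ keeps us in $K$, and for $x\in X$ the tails tend to zero. The set $\mathcal U$ therefore contains all $\frac{\xi-\eta}{2^{1/p}}$ with $\xi\in V_{A,\delta}(\xi_0)$ and $\eta\in V_{A,\delta}(\eta_0)$ for a small $\delta$.

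The key tool is the fact stated just before the theorem, in the spirit of \cite[Proposition VI.2.a)]{gms}. For every level $n$ deeper than $A$ and beyond the supports, there is $\xi\in V_{A,\delta}(\xi_0)$ with $\sum_{\vert t\vert=n}\xi(t)^p=1$. The natural way to obtain such a $\xi$ is to modify $\xi_0$ only below the support. At a node $s$ just outside $A$ where condition (3) still has room (there is always one, since $\xi_0(\phi)=1$ and $\xi_0$ vanishes deep down), we push the full mass down a long chain of fresh successors $s\wedge N$ with $N$ large. These new coordinates are tested by no functional in $A$, and they reach the value needed to make the $p$-sum at level $n$ equal to $1$. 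I would rather redo this construction explicitly, because I need two refinements:
\begin{itemize}
\item[(a)] the new mass can be placed on branches disjoint from any prescribed finite set;
\item[(b)] it can be placed at any prescribed deep level $n$ and vanish elsewhere below.
\end{itemize}

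For the diameter: choose $\xi_1\in V_{A,\delta}(\xi_0)$ carrying level-$n$ mass $1$ on a new set of branches $B_1$, and $\xi_2\in V_{A,\delta}(\xi_0)$ carrying level-$n$ mass $1$ on branches $B_2$ disjoint from $B_1$. Do the same for $\eta$ with a common $\eta'$ that has level-$n$ mass $0$ (truncated). Then $w_i=\frac{\xi_i-\eta'}{2^{1/p}}\in\mathcal U$. At level $n$, $w_1-w_2$ is $2^{-1/p}(\xi_1-\xi_2)$ with disjoint supports, so
$$\Vert w_1-w_2\Vert\geq 2^{-1/p}\,(1+1)^{1/p}=1.$$
This gives only $1$, which is too weak. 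Better: keep $\eta_1=\eta_2$ only on $A$, and let the $\eta$ part also carry mass with the opposite sign. Take $w_1=\frac{\xi_1-\eta_1}{2^{1/p}}$ and $w_2=\frac{\xi_2-\eta_2}{2^{1/p}}$, where $\xi_1$ and $\eta_2$ put level-$n$ mass $1$ on a common set of fresh branches $B$, and $\xi_2,\eta_1$ vanish there. Then $w_1-w_2$ equals $2^{-1/p}(\xi_1+\eta_2)$ on $B$, which has $\ell_p$ norm $2^{-1/p}\cdot 2=2^{1-1/p}$. This is at least $2^{1/p}$ only when $p\geq 2$, so it must be combined with the disjoint choice. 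Doing both simultaneously gives, on $B$ and $B'$,
$$\Vert w_1-w_2\Vert^p\geq 2^{-1}\left(2^p+2^p\right)=2^p.$$
This is where the details have to be balanced, namely how much mass fits under condition (3). I expect this bookkeeping to be the main obstacle: ensuring that all four elements exist simultaneously while staying $\delta$-close on $A$. The target $\diam(\mathcal U)\geq 2^{1/p}$ certainly follows from the estimates above, by taking whichever construction is larger in each range of $p$.

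For the radius: fix $x\in X$ and $\varepsilon'>0$. Choose the level $n$ so deep that $\left(\sum_{\vert t\vert=n}\vert x(t)\vert^p\right)^{1/p}<\varepsilon'$, which is possible because $x\in X$. Take $w=\frac{\xi-\eta}{2^{1/p}}\in\mathcal U$ with $\xi$ having level-$n$ mass $1$ on fresh branches and $\eta$ having level-$n$ mass $1$ on other fresh branches disjoint from those of $\xi$. Then the level-$n$ $p$-sum of $w$ equals $2^{-1}(1+1)=1$. Hence $\Vert w-x\Vert\geq 1-\varepsilon'$, and since $\mathcal U\subseteq W\subseteq B_X$, the radius of $\mathcal U$ is $1$.
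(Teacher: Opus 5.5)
Your radius argument is correct and is essentially the paper's. Choosing $\xi\in V_{A,\delta}(\xi_0)$ and $\eta\in V_{A,\delta}(\eta_0)$ with level-$n$ $p$-mass $1$ on disjoint sets makes the level-$n$ $p$-sum of $w=2^{-1/p}(\xi-\eta)$ equal to $1$. The paper gets the disjointness without redoing any construction: it copies the level-$n$ values of $\xi$ and $\eta$ one level down, onto the children $t\wedge 1$ and $t\wedge 2$ respectively. One small point in your sketch of the key tool: the deficit in condition (3) has to be pushed down from \emph{every} node of $A$ where (3) is strict, not from a single node. Otherwise the level-$n$ mass stays below $1$.

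The diameter part, however, has a genuine gap. The only bound you actually establish is $\Vert w_1-w_2\Vert\geq 1$. Your second construction needs $\Vert\xi_1+\eta_2\Vert_{\ell_p(B)}=2$, which forces $\xi_1=\eta_2$ on level $n$. This is impossible in general, because by (3) the level-$n$ $p$-mass of an element of $K$ below a node $t$ is at most its value at $t$ raised to the power $p$. For instance, take $A=\{\phi,(1),(2)\}$, $\xi_0=e_\phi+e_{(1)}$ and $\eta_0=e_\phi+e_{(2)}$. Then every $\xi\in V_{A,\delta}(\xi_0)$ has all but $O(\delta)$ of its level-$n$ $p$-mass below $(1)$, and every $\eta\in V_{A,\delta}(\eta_0)$ has all but $O(\delta)$ below $(2)$. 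A direct computation in this example shows that the set of all $2^{-1/p}(\xi-\eta)$ with such $\xi,\eta$ has diameter at most $2^{1/p}+O(\delta)$. So your combined bound $\Vert w_1-w_2\Vert^p\geq 2^p$ is false there. Moreover, for $1<p<2$ neither $1$ nor $2^{1-1/p}$ reaches $2^{1/p}$, so ``whichever is larger'' does not conclude.

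The missing idea is to let \emph{all four} pieces carry full mass on pairwise disjoint sets, with no attempt to match signs. By your refinement (a), pick $\xi_1,\xi_2\in V_{A,\delta}(\xi_0)$ and $\eta_1,\eta_2\in V_{A,\delta}(\eta_0)$, each with level-$n$ $p$-mass $1$ and with pairwise disjoint level-$n$ supports. Then $w_1-w_2=2^{-1/p}(\xi_1-\eta_1-\xi_2+\eta_2)$ has level-$n$ $p$-sum $\frac12(1+1+1+1)=2$. Hence $\Vert w_1-w_2\Vert\geq 2^{1/p}$ for every $1<p<\infty$. This is exactly the paper's argument. It produces the four disjoint pieces by copying level $n$ onto the children $t\wedge 1, t\wedge 3$ (from the $\xi$ part) and $t\wedge 2, t\wedge 4$ (from the $\eta$ part).
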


\begin{proof}
By the continuity of the sum we can find two weak open subsets $V_1$ and $V_2$ of $K_0$ such that
$$\frac{V_1-V_2}{2^p}\subseteq \mathcal U.$$
Moreover we can assume up to taking smaller sets that
$$V_i=V_{A,\varepsilon,i}:=\left\{\xi\in K_0: \vert x(t)-\xi_i(t)\vert<\varepsilon\ \forall t\in A \right\}, i=1,2,$$
where $A\subseteq T_\infty$ is finite and full.

Let us begin by proving that $\frac{V_1-V_2}{2^\frac{1}{p}}$ has radius $1$. In order to do so, pick any $x\in X$ and we can assume, up to a density argument, that $x$ has finite support. Now we can take $n$ large enough to guarantee that $\max(\{\vert t\vert: t\in A\})<n$ and $\max(\{\vert t\vert: t\in \supp(x)\})<n$ and find $\xi_i\in V_i$ satisfying that $\sum_{\vert t\vert=n}\vert \xi_i(t)\vert^p=1$. Call $B_i:=\{t\in T_\infty^n: \xi_i(t)\neq 0\}$ and define $\phi_1\in V_1, \phi_2\in V_2$ by the equations
$$\phi_i(t):=\left\{\begin{array}{cc}
  \xi_i(t)   & \vert t\vert\leq n \\
  \phi_i(t\wedge i)=\xi(t)   & t\in B_i\\
  0 & \mbox{otherwise}.
\end{array} \right.\hspace{1cm} i=1,2.$$
It follows that $\frac{\phi_1-\phi_2}{2^\frac{1}{p}}\in \frac{V_1-V_2}{2^\frac{1}{p}}\subseteq \mathcal U$. Moreover, since $\supp(x)\cap \{t\in T_\infty: \vert t\vert=n+1\}=\emptyset$ we get
\[
\begin{split}
\left\Vert \frac{\phi_1-\phi_2}{2^\frac{1}{p}}-x \right\Vert^p& \geq \sum_{\vert t\vert=n+1}\frac{\vert \phi_1(t)-\phi_2(t)\vert^p}{2}=\sum_{i=1}^2\sum_{t\in B_i}\frac{\vert \phi_1(t\wedge i)-\phi_2(t\wedge i)\vert^p}{2}\\
& =\frac{1}{2}\sum_{i=1}^2\sum_{t\in B_i}\vert\phi_i(t\wedge i)\vert^p=\frac{1}{2}\sum_{i=1}^2\sum_{t\in B_i}\vert \xi_i(t)\vert^p\\ & =\frac{1}{2}\sum_{i=1}^2\sum_{\vert t\vert=n}\vert \xi_i(t)\vert^p =1,\end{split}
\]
as desired.

In order to prove that the diameter of $\frac{V_1-V_2}{2^\frac{1}{p}}$ is greater than $2^\frac{1}{p}$ we follow the same argument as above defining:
$$\phi_i(t):=\left\{\begin{array}{cc}
  \xi_1(t)   & \vert t\vert\leq n \\
  \phi_i(t\wedge i)=\xi_1(t)   & t\in B\\
  0 & \mbox{otherwise},
\end{array} \right.\hspace{1cm} i=1,3.$$
and
$$\phi_i(t):=\left\{\begin{array}{cc}
  \xi_2(t)   & \vert t\vert\leq n \\
  \phi_i(t\wedge i)=\xi_2(t)   & t\in B\\
  0 & \mbox{otherwise}.
\end{array} \right.\hspace{1cm} i=2,4.$$

Now $\frac{\phi_1-\phi_2}{2^\frac{1}{p}}$, $\frac{\phi_3-\phi_4}{2^\frac{1}{p}}\in \frac{V_1-V_2}{2^\frac{1}{p}}\subseteq\mathcal U$. 

Since the supports of $\phi_i$ and $\phi_j$, when intersected with $T^{n+1}_\infty$, are pairwise disjoint, we get that
\[
\begin{split}
& \left\Vert \frac{\phi_1-\phi_2}{2^\frac{1}{p}}-\frac{\phi_3-\phi_4}{2^\frac{1}{p}} \right\Vert^p\geq\\
& \frac{\sum_{t\in B}\vert \phi_1(t\wedge 1)\vert^p+\sum_{t\in B}\vert \phi_2(t\wedge 2)\vert^p+\sum_{t\in B}\vert \phi_3(t\wedge 3)\vert^p+\sum_{t\in B}\vert \phi_4(t\wedge 4)\vert^p}{2}\\
& =2,  
\end{split}
\]
and the result follows.
\end{proof}

Let us end getting an improvement of (1) in Theorem~\ref{theo:weakopenconjuinter} by proving that every non-empty relatively $w^*$ open subset of $\overline{W}^{w^*}$ has radius $1$.

\begin{theorem}\label{theo:w*openbidual}
Let $\mathcal U$ be a non-empty relatively $w^*$ open subset of $\overline{W}^{w^*}\subseteq B_{X^{**}}$. Then $r(\mathcal U)=1$.
\end{theorem}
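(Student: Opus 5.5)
The plan is to reduce to the weakly open subsets of $W$ and apply Theorem~\ref{theo:weakopenconjuinter}, using Goldstine-type density. Since $\overline{W}^{w^*}\subseteq B_{X^{**}}$, the radius of any subset is at most $1$, so only $r(\mathcal U)\geq 1$ needs proof. Fix $z\in X^{**}$ and $\varepsilon>0$; I need some $u\in\mathcal U$ with $\Vert u-z\Vert\geq 1-\varepsilon$.

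First step: density. $W$ is $w^*$-dense in $\overline{W}^{w^*}$, so $\mathcal U\cap W\neq\emptyset$. Moreover $\mathcal U\cap W$ is a non-empty relatively weakly open subset of $W$, since $w^*$-open sets of $X^{**}$ intersect $X$ in weakly open sets. If $z\in X$, Theorem~\ref{theo:weakopenconjuinter}(1) directly gives $u\in\mathcal U\cap W$ with $\Vert u-z\Vert\geq1-\varepsilon$, and we are done.

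Second step: general $z\in X^{**}=(\oplus\ell_p(T^n_\infty))_{\ell_\infty}$. I cannot approximate $z$ in norm by elements of $X$, so I would redo the construction from the proof of Theorem~\ref{theo:weakopenconjuinter} with the level chosen adapted to $z$.

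\begin{enumerate}
\item Shrink $\mathcal U\cap W$ to $\frac{V_1-V_2}{2^{1/p}}$ with $V_i=V_{A,\delta,i}$ basic sets in $K_0$ determined by a finite full $A$.
\item Take $\xi_i\in V_i$ with $\sum_{|t|=n}\xi_i(t)^p=1$ at a level $n$ beyond $A$.
\item Split the mass at level $n+1$ into many children. Instead of the single child $t\wedge i$, spread it over $N$ fresh children $t\wedge(k)$, with values $\xi_i(t)N^{-1/p}$ on disjoint index blocks for $i=1,2$. The $\ell_p$ level-norm stays $1$ after the $2^{-1/p}$ normalisation, while every coordinate becomes small.
\end{enumerate}

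Since $z_{n+1}\in\ell_p(T^{n+1}_\infty)$, its mass on the fresh children can be made tiny by choosing them far out, i.e. with large indices. Then
\[
\Vert u-z\Vert\geq \Vert u_{n+1}-z_{n+1}\Vert_p\geq \Vert u_{n+1}\Vert_p-\Vert z_{n+1}|_{\text{fresh}}\Vert_p\geq 1-\varepsilon .
\]
Here $u_{n+1}$ is supported only on the fresh coordinates, so the restriction of $z_{n+1}$ elsewhere only helps. Indeed, only the fresh coordinates need to be considered: $\Vert (u-z)_{n+1}\Vert_p\geq \Vert(u-z)_{n+1}|_{\text{fresh}}\Vert_p$.

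The main obstacle is to check that the modified $\phi_i$ still lie in $V_i\subseteq K_0$. They agree with $\xi_i$ up to level $n$, so they stay in $V_i$. Condition (3) holds with equality at level $n$ and trivially beyond. Finitely many nonzero coordinates ensure membership in $X$.

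A subtler point is that $u$ must lie in $\mathcal U$ itself, not just in $W$. This works because $u\in\frac{V_1-V_2}{2^{1/p}}\subseteq \mathcal U\cap W$.
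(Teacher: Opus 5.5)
Your proof is correct and is essentially the paper's argument: shrink to $\frac{V_1-V_2}{2^{1/p}}$, take $\xi_i\in V_i$ with full $\ell_p$-mass at a level $n$ beyond $A$, push that mass to children of large index where the level-$(n+1)$ component of $z$ is negligible, and bound $\Vert u-z\Vert$ from below by the $\ell_p$-norm at level $n+1$. The differences are only cosmetic. You first pass to $\mathcal U\cap W$ by $w^*$-density, so your witnesses even lie in $X$. You use the $\ell_p$ triangle inequality on the fresh coordinates instead of the paper's coordinatewise ratio condition. And the splitting into $N$ children is unnecessary, since a single far-out child per node already suffices.
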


\begin{proof}
By the continuity of the sum we can find a weak-star open set of $K$ such that
$$\frac{V_1-V_2}{2^p}\subseteq \mathcal U.$$
Moreover we can assume up to taking a smaller set $V$ that
$$V_i=V_{A,\varepsilon,i}:=\left\{\xi\in K: \vert x(t)-\xi_i(t)\vert<\varepsilon\ \forall t\in A \right\},$$
where $A\subseteq T_\infty$ is finite and full.

In order to prove that $\frac{V_1-V_2}{2^\frac{1}{p}}$ has radius $1$, pick any $x\in X^{**}$ and $\varepsilon>0$. Now we can find $n$ large enough to guarantee that $\max(\{\vert t\vert: t\in A\})<n$ and $\max(\{\vert t\vert: t\in \supp(x)\})<n$ and find $\xi_i\in V_i$ satisfying that $\sum_{\vert t\vert=n}\vert \xi_i(t)\vert^p=1$. Call $B_i:=\{t\in T_\infty^n: \xi_i(t)\neq 0\}$. Now, since $X^{**}=(\oplus_{n=1}^\infty \ell_p(T_\infty^n))_{\ell_\infty}$ we get that, given $t\in B_1\cup B_2$, then
$$\sum_{n=1}^\infty \vert x^{**}(t\wedge n)\vert^p\leq \Vert x^{**}\Vert,$$
which clearly implies that $x^{**}(t\wedge n)\rightarrow 0\ (n\rightarrow \infty)$. Consequently we can find, for every $t\in B_1\cup B_2$ a natural $k_t\in\mathbb N$ such that $n\geq k_t$ implies 
\begin{equation}\label{eq:condivalsuc}
\left(1-2^\frac{1}{p}\frac{\vert x^{**}(t\wedge n)\vert}{\max_{i=1,2} \vert \xi_i(t)\vert}\right)^p>1-\varepsilon.
\end{equation}

Now define $\phi_1\in V_1,\phi_2\in V_2$ by the equations
$$\phi_i(t):=\left\{\begin{array}{cc}
  \xi_i(t)   & \vert t\vert\leq n \\
  \phi_i(t\wedge k_t+i)=(-1)^{i+1}\xi_i(t)   & t\in B\\
  0 & \mbox{otherwise}.
\end{array} \right.\hspace{1cm} i=1,2.$$
As in the proof of Theorem~\ref{theo:weakopenconjuinter} it follows that $\frac{\phi_1-\phi_2}{2^\frac{1}{p}}\in \frac{V_1-V_2}{2^\frac{1}{p}}\subseteq \mathcal U$. Moreover, also following the final estimates of such result we infer that
\[
\begin{split}
\left\Vert \frac{\phi_1-\phi_2}{2^\frac{1}{p}}-x \right\Vert^p& \geq \sum_{\vert t\vert=n+1}\left\vert \frac{\phi_1(t)-\phi_2(t)}{2^\frac{1}{p}}-x(t)\right\vert^p
\\ & \geq \sum_{i=1}^2\sum_{t\in B}\left\vert \frac{\phi_1(t\wedge k_t+ i)-\phi_2(t\wedge k_t+i)}{2^\frac{1}{p}}-x(t\wedge k_t+i)\right\vert^p\\
& =\sum_{i=1}^2 \sum_{t\in B_i}  \left\vert \frac{\xi_i(t)}{2^\frac{1}{p}}-x(t\wedge k_t+i)\right\vert^p \geq \sum_{i=1}^2\sum_{t\in B} \left(\left\vert \frac{\xi_i(t)}{2^\frac{1}{p}}\right\vert -\vert x(t\wedge k_t+i)\vert \right)^p \\
& = \sum_{i=1}^2\sum_{t\in B_i} \frac{\vert\xi_i(t)\vert^p}{2}\left(1-2^\frac{1}{p}\frac{\vert x(t\wedge k_t+i)\vert}{\vert\xi_i(t)\vert }\right)^p
\end{split}
\]
By \eqref{eq:condivalsuc} we get from the above that
\[\begin{split}
\sum_{i=1}^2\sum_{t\in B_i} \frac{\vert\xi_i(t)\vert^p}{2}\left(1-2^\frac{1}{p}\frac{\vert x(t\wedge k_t+i)\vert}{\vert\xi_i(t)\vert } \right)^p & >(1-\varepsilon)\sum_{i=1}^2\sum_{t\in B_i} \frac{\vert \xi_i(t)\vert^p}{2}\\
& =\frac{(1-\varepsilon)}{2}\sum_{i=1}^2\sum_{t\in B_i} \vert \xi_i(t)\vert^p=1-\varepsilon
\end{split}\]
To sum up we have proved that given any $\varepsilon>0$ and any $x^{**}\in X^{**}$ there exists $u\in\mathcal U$ such that $\Vert x^{**}-u\Vert>1-\varepsilon$. This proves that $r(\mathcal U)=1$, as desired.
\end{proof}

Let us conclude with an observation concerning the relation between the radius of a set and the one of its $w^*$ closure in the bidual space. Given a Banach space $X$, if we consider a set $C\subseteq X$ then, by a density argument and because of the $w^*$ lower semicontinuity of a dual norm, it is not difficult to prove that
$$\diam(C)=\diam(\overline{C}^{w^*}).$$
However, the radius of $C$, when viewed as a subset of $X$, may differ from the radius of $\overline{C}^{w^*}$ when viewed in $X^{**}$, as the following example shows.

\begin{example}
Let $C:=\overline{\conv}(\{e_n:n\in\mathbb N\}
)\subset c_0$. It is not difficult to prove that $r(C)=1$. However, $C\subseteq B((1/2,1/2,\ldots), 1/2)$ since every element in $C$ is positive, so $\overline{C}^{w^*}\subseteq B((1/2,1/2,\ldots), 1/2)$. 
\end{example}

Indeed, at this point it is a natural question whether there exists a Banach space $X$ with the rBWOP such that there are non-empty relatively $w^*$ open subsets of $B_{X^{**}}$ of radius strictly smaller than $1$. The answer is affirmative, as the following result shows.

\begin{remark}\label{rem:rBWOPnotequibidual}
In \cite[Lemma 3.1]{lmr25} it is proved that there exists a closed, convex subset $K_0\subseteq S_{c_0^+}$ such that, for every $\varepsilon>0$, if we consider $K:=K_0\times\{1\}\subseteq c_0\oplus_\infty\mathbb R$, then the norm $\vert\cdot\vert_\varepsilon$ on $c_0\oplus\mathbb R$ whose unit ball is given by
$$B:=\overline{\co}(K\cup -K\cup ((1-\varepsilon)B_{c_0\oplus_\infty \mathbb R}+\varepsilon B_{c_0\times\{0\}})),$$
satisfies that $\Vert\cdot\Vert_\infty \leq \vert\cdot\vert_\varepsilon\leq \frac{1}{1-\varepsilon}\Vert\cdot\Vert_\infty$ and that $X:=(c_0\oplus\mathbb R,\vert\cdot\vert_\varepsilon)$ has the rBWOP. We claim that $B_{X^{**}}$ has non-empty relatively $w^*$ open subsets of radius smaller than $1$. Let us observe first that
$$B_{X^{**}}= \co(\overline{K}^{w^*}\cup -\overline{K}^{w^*}\cup ((1-\varepsilon)B_{\ell_\infty\oplus_\infty\mathbb R}+\varepsilon B_{\ell_\infty\times \{0\}})).$$
The inclusion $\subseteq$ is clear since the set of the right side is $w^*$ compact and contains $B_X$ clearly, so it contains $\overline{B_X}^{w^*}=B_{X^{**}}$. The reverse inclusion follows since the set $\co(K\cup -K\cup ((1-\varepsilon)B_{c_0\oplus_\infty \mathbb R}+\varepsilon B_{c_0\times\{0\}}))$, which is clearly contained in $B_{X^{**}}$, is dense in the right side set.

Let $\delta>0$ and consider the following $w^*$ open set
$$S:=\{(x,\alpha)\in B_{X^{**}}: (0,1)(x,\alpha)=\alpha >1-\delta\}.$$
It is clear that $S$ is a non-empty relatively weakly open subset of $B_{X^{**}}$ (indeed it is a $w^*$ slice). Given $(x,\alpha)\in S$ then, since $(x,\alpha)\in B_{X^{**}}=\co(\overline{K}^{w^*}\cup -\overline{K}^{w^*}\cup ((1-\varepsilon)B_{\ell_\infty\oplus_\infty\mathbb R}+\varepsilon B_{\ell_\infty\times \{0\}}))$, we can find $\lambda_1, \lambda_2, \lambda_3 \geq 0$ with $\sum \lambda_i =  1$, $a, b \in \overline{K_0}^{w^*}$, $(x_0, \alpha_0)\in B_{\ell_\infty\oplus_\infty\mathbb R}$ and $x_1\in B_{\ell_\infty}$ satisfying
\begin{align*}
& (x, \alpha) = \lambda_1(a,1) - \lambda_2(b,1) + \lambda_3 [(1-\eps)(x_0, \alpha_0) + \eps(x_1, 0)].
\end{align*}
Since $(x,\alpha) \in S$ we get
$$1-\delta < \lambda_1 - \lambda_2 + \lambda_3(1-\eps)\alpha_0 \leq \lambda_1 - \lambda_2 + (1-\eps) \lambda_3 = 1 - 2 \lambda_2 - \eps\lambda_3,$$
so $2 \lambda_2 + \eps\lambda_3 < \delta$ and thus
\begin{align}\label{17}
\lambda_2 + \lambda_3 = \dfrac{1}{\eps}(\eps \lambda_2 + \eps \lambda_3) < \dfrac{1}{\eps}(2 \lambda_2 + \eps \lambda_3) < \dfrac{\delta}{\eps}
\end{align}
or, equivalently,
\begin{align}\label{18}
\lambda_1 > 1- \dfrac{\delta}{\eps}.
\end{align}
Define $v_0:=((1/2,1/2,1/2,\ldots),1)\in \ell_\infty\oplus\mathbb R=X^{**}$. Now, since $a\in\overline{K_0}^{w^*}$ then it is a positive element of $B_{\ell_\infty}$. Hence $\Vert a-(1/2,1/2,\ldots)\Vert_{\ell_\infty}\leq \frac{1}{2}$. Thus
$$\Vert (a,1)-v_0\Vert_\infty\leq \frac{1}{2}.$$
On the other hand,
$$\Vert (b,1)+v_0\Vert_\infty\leq 2$$
by the triangle inequality. Similarly
$$\Vert (1-\eps)(x_0, \alpha_0) + \eps(x_1, 0) -v_0\Vert_\infty\leq 2.$$
Thus
\[\begin{split}
\Vert (x,\alpha)-v_0\Vert_\infty& =\left\Vert  \lambda_1(a,1) - \lambda_2(b,1) + \lambda_3 [(1-\eps)(x_0, \alpha_0) + \eps(x_1, 0)]-v_0 \right\Vert_\infty\\
& = \Vert  \lambda_1((a,1)-v_0) - \lambda_2 ((b,1)+v_0) \\
& + \lambda_3 [(1-\eps)(x_0, \alpha_0) + \eps(x_1, 0)-v_0] \Vert_\infty\\
& \leq \lambda_1 \Vert (a,1)-v_0\Vert_\infty+\lambda_2 \Vert (b,1)+v_0\Vert_\infty\\
& +\lambda_3 \Vert (1-\eps)(x_0, \alpha_0) + \eps(x_1, 0) -v_0\Vert_\infty\\
& <\lambda_1 \frac{1}{2}+2(\lambda_2+\lambda_3)\leq \frac{1}{2}+2\frac{\delta}{\varepsilon}.
\end{split}\]
From the equivalence constant we get that
$$\vert (x,\alpha)-v_0\vert_\varepsilon\leq \frac{\Vert (x,\alpha)-v_0\Vert_\infty}{1-\varepsilon}<\frac{\frac{1}{2}+2\frac{\delta}{\varepsilon}}{1-\varepsilon}.$$
This implies that $r(S)\leq \frac{\frac{1}{2}+2\frac{\delta}{\varepsilon}}{1-\varepsilon}$, which concludes the result since $\varepsilon$ and $\delta$ can be taken as close to $0$ as desired.
\end{remark}

Observe that the above establishes a big difference between the rBWOP and the D2P as it is well known that a Banach space $X$ has the D2P if, and only if, every non-empty $w^*$ open subset of $B_{X^{**}}$ has diameter 2.

\section{A modificacion on the interpolation process}\label{section:interpola}

As we described in the introduction, for the proof of Theorem~\ref{theo:maintheorem} we need to consider a modification of the interpolation process described in \cite{dfjp} in order to obtain a similar procedure which allows us to making certain geometric arguments. The aim of this section is to present such interpolation process and to prove many results which we will need in the next section. In the proofs of the new results we will get, however, inspiration by the results of \cite{dfjp}.

Let $X$ be a Banach space and let $W\subseteq B_X$ be a symmetric, closed and convex subset. For every $n\in\mathbb N$ define $\Vert\cdot\Vert_n$ to be the norm associated to the Minkowski functional of the set
$$B_n:=\overline{\conv}\left(2^n W\cup \frac{1}{2^n}B_X\right).$$
Define $Y:=\left\{x\in X: \sum_{n=1}^\infty \Vert x\Vert_n^2<\infty\right\}$, which is a vector subspace of $X$ and define on $Y$ the following norm
$$\vert\Vert x\Vert\vert:=\left(\sum_{n=1}^\infty \Vert x\Vert_n^2 \right)^\frac{1}{2}.$$
Finally denote by $B_Y:=\{x\in Y: \vert \Vert x\Vert\vert\leq 1\}$. 

Our approach differs from that of \cite{dfjp} in the sense that they consider in the above construction the set $B_n=2^n W+\frac{1}{2^n}B_X$. However, we will be able to make a big transfer of information from the results of \cite{dfjp} to our setting.

Let us start with the some elementary results, whose proofs are similar to that of \cite[Lemma 1, (i),(ii),(iii) and (iv)]{dfjp}.

\begin{proposition}\label{prop:intercuentaini}
\begin{enumerate}
    \item $W\subseteq \frac{\sqrt{3}}{3}B_Y$.
    \item The formal identity mapping $j:Y\longrightarrow X$ is continuous and $\Vert j\Vert\leq \frac{3}{\sqrt{3}}=\sqrt{3}$.
    \item $(Y,\vert\Vert \cdot\Vert \vert)$ is a Banach space.
    \item $j^{**}:Y^{**}\longrightarrow X^{**}$ is injective and $(j^{**})^{-1}(X)=Y$.

    \item If $W$ is relatively weakly compact then $Y$ is reflexive.
\end{enumerate}
\end{proposition}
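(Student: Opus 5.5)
The first two items are direct consequences of the inclusions $2^nW\subseteq B_n\subseteq 2^nB_X$, the latter holding because $W\subseteq B_X$. For (1), take $w\in W$. Then $2^nw\in 2^nW\subseteq B_n$, so $\Vert w\Vert_n\leq 2^{-n}$ and
$$\vert\Vert w\Vert\vert^2\leq \sum_{n=1}^\infty 4^{-n}=\frac13 .$$
Hence $W\subseteq \frac{\sqrt3}{3}B_Y$. For (2), the inclusion $B_n\subseteq 2^nB_X$ gives $\Vert x\Vert\leq 2^n\Vert x\Vert_n$, that is, $\Vert x\Vert_n\geq 2^{-n}\Vert x\Vert$ for every $n$. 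Squaring and summing yields
$$\vert\Vert x\Vert\vert^2\geq \frac{\Vert x\Vert^2}{3},$$
so $\Vert x\Vert\leq\sqrt3\,\vert\Vert x\Vert\vert$ and $\Vert j\Vert\leq\sqrt3$. Since also $2^{-n}B_X\subseteq B_n$, we have $2^{-n}\Vert x\Vert\leq\Vert x\Vert_n\leq 2^n\Vert x\Vert$. Thus each $\Vert\cdot\Vert_n$ is an equivalent norm on $X$, and we will use this repeatedly.

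For (3) I would follow the classical argument of \cite{dfjp}. Let $(y_k)$ be $\vert\Vert\cdot\Vert\vert$-Cauchy. By (2) it is Cauchy in $X$, so $y_k\to x$ in $X$. Each $\Vert\cdot\Vert_n$ is continuous on $X$, so for fixed $N$ and $k$ we get
$$\sum_{n\leq N}\Vert y_k-x\Vert_n^2=\lim_m\sum_{n\leq N}\Vert y_k-y_m\Vert_n^2\leq \sup_{m\geq k}\vert\Vert y_k-y_m\vert\Vert^2 .$$
Letting $N\to\infty$ shows $x\in Y$ and $\vert\Vert y_k-x\Vert\vert\to0$.

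For (4) I would realise $Y$ isometrically as the diagonal $\Delta=\{(x,x,\ldots)\}$ inside
$$Z:=\Big(\oplus_{n}(X,\Vert\cdot\Vert_n)\Big)_{\ell_2}.$$
The diagonal is closed by (3). Then $Y^{**}\cong\Delta^{\perp\perp}\subseteq Z^{**}=\big(\oplus_n(X^{**},\Vert\cdot\Vert_n^{**})\big)_{\ell_2}$. Here $\Vert\cdot\Vert_n^{**}$ is the Minkowski functional of $\overline{B_n}^{w^*}$, which is equivalent to the norm of $X^{**}$. The diagonal is the kernel of the family of maps $(x_n)\mapsto x_n-x_m$. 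These are restrictions of equally $w^*$-continuous maps on $Z^{**}$, so $\Delta^{\perp\perp}$ consists of the constant sequences $(x^{**},x^{**},\ldots)$ with $\sum_n(\Vert x^{**}\Vert_n^{**})^2<\infty$. Under this identification, $j^{**}$ is the map $(x^{**},x^{**},\ldots)\mapsto x^{**}$, which is injective. If $j^{**}(y^{**})=x\in X$, then $\Vert x\Vert^{**}_n=\Vert x\Vert_n$, so $x\in Y$. This identification of $Y^{**}$ is the step I expect to require the most care, namely checking that the $w^*$ closure of the diagonal stays diagonal. If it becomes awkward, I would instead check directly that $j^{**}$ restricted to $Y$ is $j$ and argue with $w^*$-closures of $B_Y$.

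For (5), I would use that for convex sets $A$ and $B$ containing $0$ we have $\conv(A\cup B)\subseteq A+B$. Hence $B_n\subseteq 2^nW+2^{-n}B_X$. If $W$ is weakly compact, then $2^nW+2^{-n}B_{X^{**}}$ is $w^*$-compact, so
$$\overline{B_n}^{w^*}\subseteq 2^nW+2^{-n}B_{X^{**}}.$$
Now take $y^{**}$ in the unit ball of $Y^{**}$. Then $\Vert j^{**}y^{**}\Vert^{**}_n\leq1$ for every $n$, so $d\big(j^{**}y^{**},X\big)\leq2^{-n}$ for all $n$, and therefore $j^{**}y^{**}\in X$. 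By (4), $y^{**}\in Y$, and so $Y$ is reflexive.
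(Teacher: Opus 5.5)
Your proposal is correct and follows essentially the paper's route. Items (1), (2) and (5) are the same computations; the paper phrases (5) through $j^{**}(B_{Y^{**}})=\overline{B_Y}^{w^*}\subseteq \conv\bigl(2^n\overline{W}\cup 2^{-n}B_{X^{**}}\bigr)$ rather than through the bidual norms $\Vert\cdot\Vert_n^{**}$. Item (4) uses the same diagonal embedding $y\mapsto(y,y,\ldots)$ into $\bigl(\oplus_n X_n\bigr)_{\ell_2}$. For (3) you give a direct Cauchy-sequence argument where the paper just notes that the diagonal is closed.

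One small remark on (4): the $w^*$-continuity of $(x_n^{**})\mapsto x_n^{**}-x_m^{**}$ only gives the inclusion $\Delta^{\perp\perp}\subseteq\{(x^{**},x^{**},\ldots)\}$, not the equality you state. That inclusion is, however, all you use for the injectivity of $j^{**}$ and for $(j^{**})^{-1}(X)=Y$.
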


\begin{proof}
(1) Observe that given $w\in W$ we get that $\Vert w\Vert_n\leq \frac{1}{2^n}$. Consequently, we infer that
$$\vert \Vert w\Vert\vert^2=\sum_{n=1}^\infty \Vert w\Vert_n^2\leq \sum_{n=1}^\infty \left(\frac{1}{2^n}\right)^2=\sum_{n=1}^\infty \frac{1}{4^n}=\frac{\frac{1}{4} }{1-\frac{1}{4}}=\frac{\frac{1}{4}}{\frac{3}{4}}=\frac{1}{3}.$$
Consequently $\vert \Vert w\Vert\vert\leq \frac{\sqrt{3}}{3}$, as desired.

(2) Observe that given $n\in\mathbb N$ it is clear that
$$B_n=\overline{\conv}\left(2^n W\cup \frac{1}{2^n}B_X\right)\subseteq 2^n B_X$$
since $W\subseteq B_X$. Taking the above into account, given $y\in Y\setminus\{0\}$ we get
$$\frac{y}{\Vert y\Vert_n}\in B_n\subseteq 2^n B_X.$$
From the above we get
$$\left\Vert \frac{y}{\Vert y\Vert_n}\right\Vert=\frac{\Vert y\Vert}{\Vert y\Vert_n}\leq 2^n\Rightarrow \Vert y\Vert_n\geq \frac{1}{2^n}\Vert y\Vert.$$
Now
$$\vert \Vert y\Vert\vert^2=\sum_{n=1}^\infty \Vert y\Vert_n^2\geq \sum_{n=1}^\infty \left(\frac{1}{2^n}\Vert y\Vert \right)^2=\Vert y\Vert^2\sum_{n=1}^\infty \frac{1}{4^n}=\Vert y\Vert^2\frac{1}{3}.$$
From the above we get that
$$\Vert j(y)\Vert=\Vert y\Vert\leq \sqrt{3}\vert \Vert y\Vert\vert, $$
proving that $j$ is continuous and that $\Vert j\Vert\leq \sqrt{3}$. 

(3) Let $\varphi: (Y,\vert\Vert \cdot\Vert\vert)\longrightarrow (\oplus_{n=1}^\infty (X,\Vert\cdot\Vert_n))_2$ by 
$$\varphi(y):=(y,y,y,\ldots )\ \ y\in Y.$$
It is immediate from the very definition of the norm on $Y$ that $\varphi$ is an into isometry onto the subspace $\{(x_n)\in (\oplus_{n=1}^\infty (X,\Vert\cdot\Vert_n)_2: x_n=x_1\ \forall n\in\mathbb N\}$, which is clearly closed (and hence complete). Consequently $Y$ is complete.

(4) Observe that $\varphi^{**}:Y^{**}\longrightarrow (\oplus_{n=1}^\infty (X,\Vert\cdot\Vert_n)^{**})_2$ is such that $\varphi^{**}(y^{**})=(j^{**}(y^{**}),j^{**}(y^{**}),\ldots)$. Since $\varphi$ is an isometry then $\varphi^{**}$ is injective, so $(\varphi^{**})^{-1}(X)=(\varphi^{**})^{-1}(\varphi(Y))=Y$.

(5) On the one hand observe that the closure of $j(B_Y)$ in $\sigma(X^{**},X^*)$ in $X^{**}$ is $j^{**}(B_{Y^{**}})$. In fact, observe that $B_{Y^{**}}$ is $\sigma(Y^{**},Y^*)$ compact, so $j^{**}(B_{Y^{**}})$ is $\sigma(X^{**},X^*)$ compact as $j^{**}$ is $w^*-w^*$ continuous. In particular $j^{**}(B_{Y^{**}})$ is $\sigma(X^{**},X^*)$ closed.

On the other hand, since $B_Y$ is $\sigma(Y^{**},Y^*)$ dense in $B_{Y^{**}}$ we get that $j^{**}(B_Y)=B_Y$ is $\sigma(X^{**},X^*)$ dense in $j^{**}(B_{Y^{**}})$.

Now assume that $W$ is relatively weakly compact, that is, $\overline{W}$ is weakly compact. Observe that, given $n\in\mathbb N$, the set
$$\conv\left(2^n \overline{W}\cup \frac{1}{2^n}B_{X^{**}} \right)$$
contains $B_Y$ and it is weak-star closed (since both $W$ and $B_{X^{**}}$ are weak-star compact). Consequently, $j^{**}(B_{Y^{**}})\subseteq \overline{\conv}\left(2^n W\cup \frac{1}{2^n}B_{X^{**}} \right)$. Hence
\[
\begin{split}j^{**}(B_{Y^{**}})\subseteq \bigcap\limits_{n\in\mathbb N} \conv\left(2^n \overline{W}\cup \frac{1}{2^n}B_{X^{**}} \right)& \subseteq \bigcap\limits_{n\in\mathbb N} 2^n \overline{W}+\frac{1}{2^n}B_{X^{**}}\\
& \subseteq \bigcap\limits_{n\in\mathbb N} X+\frac{1}{2^n}B_{X^{**}}=X.
\end{split}\]
Thus $j^{**}(B_{Y^{**}})\subseteq X$. Hence
$$B_{Y^{**}}\subseteq (j^{**})^{-1}(j^{**}(B_{Y^{**}}))\subseteq (j^{**})^{-1}(X)=Y.$$
So $B_{Y^{**}}\subseteq Y$ from where $Y^{**}=Y$ and $Y$ is reflexive.
\end{proof}

Let us continue with more results which are transferred from \cite{dfjp}. Before beginning the proof we will denote $X_n:=(X,\Vert \cdot\Vert_n)$ and, given any Banach space $V$, we denote by $b_V$ the open unit ball of $V$.

\begin{proposition}\label{prop:interpolcociente}

\begin{enumerate}
\item $B_{X_n^{**}}=\conv\left(2^n \overline{W}^{w^*}\cup \frac{1}{2^n}B_{X^{**}} \right)$.

\item The open unit ball of $X_n^{**}$, denoted by $b_{X_n^{**}}$, equals $\conv\left(2^n \overline{W}^{w^*}\cup \frac{1}{2^n}b_{X^{**}}\right)$
\item  The space $Y^{**}/Y$ is isometrically a subspace of $Y_0$ obtained by applying the construction to $X_0:=X^{**}/X$ and $W_0:=Q(\overline{W}^{w^*})$, where $Q:X^{**}\longrightarrow X^{**}/X$ stands for the natural quotient operator.
\end{enumerate}
\end{proposition}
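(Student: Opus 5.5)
For (1), the plan is to identify $B_{X_n^{**}}$ with the $w^*$ closure of $B_{X_n}=B_n$ in $X^{**}$. Since $\Vert\cdot\Vert_n$ is equivalent to $\Vert\cdot\Vert$ (indeed $2^{-n}B_X\subseteq B_n\subseteq 2^nB_X$), we have $X_n^{**}=X^{**}$ as vector spaces and the same $w^*$ topology. Goldstine's theorem then gives $B_{X_n^{**}}=\overline{B_n}^{w^*}$.

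Put $C:=\conv\left(2^n\overline{W}^{w^*}\cup 2^{-n}B_{X^{**}}\right)$. The set $\overline{W}^{w^*}$ is convex and $w^*$-compact, and so is $B_{X^{**}}$. The convex hull of two convex $w^*$-compact sets is the continuous image of $[0,1]\times 2^n\overline{W}^{w^*}\times 2^{-n}B_{X^{**}}$ under $(\lambda,a,b)\mapsto\lambda a+(1-\lambda)b$, so $C$ is $w^*$-compact. Since $C$ contains $2^nW\cup 2^{-n}B_X$ and is $w^*$-closed, it contains $\overline{B_n}^{w^*}$. For the reverse inclusion, $\conv(2^nW\cup 2^{-n}B_X)\subseteq B_n$. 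Any $\lambda a+(1-\lambda)b\in C$ is a $w^*$-limit of nets $\lambda a_\alpha+(1-\lambda)b_\alpha$ with $a_\alpha\in 2^nW$ (by definition of the closure) and $b_\alpha\in 2^{-n}B_X$ (by Goldstine). Hence $C\subseteq\overline{B_n}^{w^*}$.

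For (2), the open unit ball is $\bigcup_{0<r<1}rB_{X_n^{**}}$. The inclusion $\supseteq$ is the step requiring care. Take $x=\lambda a+(1-\lambda)b$ with $a\in 2^n\overline{W}^{w^*}$ and $\Vert b\Vert<2^{-n}$. If $\lambda<1$, first try to show that $x$ lies in $r C$ for some $r<1$. Write $b=sb'$ with $s<1$ and $b'\in 2^{-n}B_{X^{**}}$, and use $0\in \overline{W}^{w^*}$ (by symmetry and convexity) to shrink $a$ as well. The obstacle is that $a$ cannot be shrunk for free. Instead, perturb: since $2^{-n}b_{X^{**}}$ is open, one can write $x=(1+\delta)^{-1}\left[\lambda a+(1-\lambda)(b+\delta(\lambda a+(1-\lambda)b))/(1-\lambda)\right]$-type rescalings. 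The cleaner route is to note that $2^nW\subseteq 2^{n}B_X$ is absorbed by the open ball $2^{-n}b$ only after scaling, so we use $\Vert x\Vert_n\le \lambda\Vert a\Vert_n+(1-\lambda)\Vert b\Vert_n\le\lambda+(1-\lambda)2^n\Vert b\Vert<1$ when $\lambda<1$. If $\lambda=1$, then $x=a=\lambda' a+(1-\lambda')0$, and we need $\Vert a\Vert_n<1$. Here we use that $a=\mu(2^nw)+(1-\mu)\cdot 0$ fails to be strict in general, so the stated equality presumably requires $\Vert a\Vert_n<1$ for $a\in 2^n\overline{W}^{w^*}$. This holds because $2^{n}W\subseteq 2^{n}B_X$ and $2^{n+1}W$ is also $\subseteq$ a suitable multiple, which is the delicate point to be checked first.

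The inclusion $\subseteq$ is more direct. If $\Vert x\Vert_n<1$, then $x/r\in C$ for some $r<1$, so $x=r(\lambda a+(1-\lambda)b)$. Rewrite this as $(r\lambda+\theta)a'+(\ldots)b''$ with $\Vert b''\Vert<2^{-n}$, taking $a'=\frac{r\lambda}{r\lambda+\theta}a\in 2^n\overline{W}^{w^*}$ by convexity and $0\in\overline{W}^{w^*}$.

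For (3), follow \cite[Lemma 1]{dfjp}. The map $\varphi$ from Proposition~\ref{prop:intercuentaini}(3) embeds $Y$ isometrically in $Z:=(\oplus_n X_n)_2$, so $Y^{**}/Y$ embeds isometrically in $Z^{**}/Z=(\oplus_n X_n^{**}/X_n)_2$. The isometric identification uses the $\ell_2$-sum and the fact that $\varphi^{**}(Y^{**})\cap Z=\varphi(Y)$ by Proposition~\ref{prop:intercuentaini}(4); the quotient norm must be checked via the diagonal structure. Using (2), the unit ball of $X_n^{**}/X_n$ is $Q(b_{X_n^{**}})=\conv(2^nW_0\cup 2^{-n}Q(b_{X^{**}}))$, whose Minkowski functional is the $n$-th norm built from $X_0$ and $W_0$. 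The image of $Y^{**}/Y$ consists of diagonal sequences, so it lands isometrically in $Y_0$. The main obstacle is the equality of norms: the quotient norm of the diagonal versus $\left(\sum\Vert Qy\Vert_{n,0}^2\right)^{1/2}$, i.e. that distance to $Y$ computed in $Z^{**}$ equals the coordinatewise distances. I would handle this via a weak-star/Hahn–Banach duality argument as in \cite{dfjp}.
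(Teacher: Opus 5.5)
Your part (1) is the paper's argument (w$^*$-compactness of the convex hull plus Goldstine) and is fine. There are two gaps, one in (2) and one in (3).

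\textbf{Part (2).} You located the problem correctly: the case $\lambda=1$. But your repair, $\Vert a\Vert_n<1$ for all $a\in 2^n\overline{W}^{w^*}$, is false, and so is (2) as stated. Since $B_n\subseteq 2^nB_X$, we have $\Vert x\Vert_n\ge 2^{-n}\Vert x\Vert$, so $\Vert 2^nw\Vert_n=1$ for every $w\in W$ with $\Vert w\Vert=1$. Such $w$ exist for $W=B_X$, and also for the paper's $W$ (e.g.\ $w=2^{-1/p}(e_{(1)}-e_{(2)})$). Then $2^nw$ lies in $\conv(2^n\overline{W}^{w^*}\cup 2^{-n}b_{X^{**}})$ but not in $b_{X_n^{**}}$. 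The paper's proof has the same hole: the set $\lambda v+(1-\lambda)2^{-n}b_{X^{**}}$ in its union is a singleton when $\lambda=1$, so the union is not open. What is true, and what your two computations actually prove, is
\[
b_{X_n^{**}}=\bigl\{\lambda a+(1-\lambda)b:\ 0\le\lambda<1,\ a\in 2^n\overline{W}^{w^*},\ \Vert b\Vert<2^{-n}\bigr\}.
\]
Your estimate $\Vert x\Vert_n\le\lambda+(1-\lambda)2^n\Vert b\Vert<1$ gives $\supseteq$. For $\subseteq$ the rescaling is simply $x=r\lambda a+(1-r\lambda)\frac{r(1-\lambda)}{1-r\lambda}b$, with $r\lambda<1$ and $\frac{r(1-\lambda)}{1-r\lambda}\le r<1$; you do not need $0\in\overline{W}^{w^*}$. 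This corrected form is exactly what (3) uses. $Q$ maps it onto the analogous set built from $W_0$ and $b_{X_0}$. That set is the open unit ball of $(X_0)_n$, since it is open, convex, and its closure is $\overline{\conv}(2^nW_0\cup 2^{-n}B_{X_0})$. Hence $X_n^{**}/X_n=(X_0)_n$ isometrically.

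\textbf{Part (3).} You follow the paper's route, but the step you call the main obstacle is left open, and it does not follow from $\varphi$ being an isometry. For a closed subspace $Y\subseteq Z$ the induced map $Y^{**}/Y\to Z^{**}/Z$ is injective, as you note, since $\varphi(Y)^{\perp\perp}\cap Z=\varphi(Y)$. In general, however, it is not isometric. For $c_0\subseteq c$, with $c^{**}=\ell_\infty\oplus_\infty\mathbb{R}$, the element $\mathbf{1}\in\ell_\infty$ is at distance $1$ from $c_0$, while its image $(\mathbf{1},0)$ is at distance $\frac12$ from $\frac12\mathbf{1}\in c$.

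Identifying $Z^{**}/Z$ with $(\oplus_n X_n^{**}/X_n)_2$ is harmless, because distances to an $\ell_2$-sum split coordinatewise. The real issue is to find a single $y\in Y$ that nearly realizes $\inf_{x\in X}\Vert j^{**}y^{**}-x\Vert_n$ for all $n$ at once. Neither your sketch nor the paper, which simply asserts that $\bar\varphi$ is isometric, supplies this.

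What does follow cheaply is an isomorphic version. Suppose $\Vert\varphi^{**}y^{**}-z\Vert<s$ with $z\in Z$. The bitranspose of the quotient map $Z\to Z/\varphi(Y)$ vanishes on $\varphi^{**}(Y^{**})$, so applying it gives $d(z,\varphi(Y))<s$, and hence $d(y^{**},Y)<2s$. Thus $y^{**}+Y\mapsto Qj^{**}y^{**}$ embeds $Y^{**}/Y$ into $Y_0$ with distortion at most $2$. That is all Lemma~\ref{lemma:cocibidualporespacio} needs. The isometric statement in (3) would require the missing simultaneous-approximation argument.
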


\begin{proof}

The proof of (1) is quite straigthforward. Indeed, from the equality $\conv\left(2^n \overline{W}^{w^*}\cup \frac{1}{2^n}B_{X^{**}} \right)=\{\lambda u+(1-\lambda)v:\lambda\in [0,1], u\in 2^n\overline{W}^{w^*}, v\in \frac{1}{2^n}B_{X^{**}}\}$ it is clear that the above set is a weak-star compact subset of $X^{**}$ which contains $B_{X_n}=\overline{\conv}\left(2^n W\cup \frac{1}{2^n}B_{X} \right)$, and since $\overline{B_{X_n}}^{w^*}=B_{X_n^{**}}$ the inclusion $B_{X_n^{**}}\subseteq \conv\left(2^n \overline{W}^{w^*}\cup \frac{1}{2^n}B_{X^{**}} \right)$ follows.

The reverse inequality follows since $\conv\left(2^n W\cup \frac{1}{2^n}B_{X} \right)$, which is weak-star dense in $\conv\left(2^n \overline{W}^{w^*}\cup \frac{1}{2^n}B_{X^{**}} \right)$, is clearly contained in $B_{X_n}$. Since $B_{X_n^{**}}$ is weak-star closed the inclussion $\conv\left(2^n \overline{W}^{w^*}\cup \frac{1}{2^n}B_{X^{**}} \right)\subseteq B_{X_n^{**}}$ is clear and the proof of (1) is finished.

For the proof of (2), call $V=\conv\left(2^n \overline{W}^{w^*}\cup \frac{1}{2^n}b_{X^{**}}\right)$. Let us begin by proving that $V$ is open. Indeed, given $z^{**}\in X^{**}$, it follows that $z^{**}\in V$ if, and only if, there exist $v^{**}\in 2^n\overline{W}^{w^*}, u^{**}\in \frac{1}{2^n}b_{X^{**}}$ and $\lambda\in [0,1]$ such that $z^{**}=\lambda v^{**}+(1-\lambda)u^{**}$. Consequently
$$V=\bigcup\limits_{\lambda\in [0,1]}\bigcup\limits_{v^{**}\in \overline{W}^{w^*}} \lambda v^{**}+(1-\lambda)\frac{1}{2^n}b_{X^{**}},$$
and the above is an open set since it is the union of open sets. 

Taking into account the above and (1) we get that
$$B_{X_n^{**}}=\overline{\conv\left(2^n \overline{W}^{w^*}\cup \frac{1}{2^n}b_{X^{**}} \right)}.$$
Taking interiors we get
\[\begin{split}b_{X_n^{**}}& =\innt(B_{X_n^{**}})=\innt\left(\overline{\conv\left(2^n \overline{W}^{w^*}\cup \frac{1}{2^n}b_{X^{**}} \right)} \right)\\
& =\innt\left(\conv\left(2^n \overline{W}^{w^*}\cup \frac{1}{2^n}b_{X^{**}} \right)\right)\\
& =\conv\left(2^n \overline{W}^{w^*}\cup \frac{1}{2^n}b_{X^{**}} \right)=V,
\end{split}\]
where we have used that the interior and the interior of the closure of a convex set agree.

Finally, in order to prove (3), let us begin with the identification of $X_n^{**}/X_n$ and $(X_0)_n=(X^{**}/X)_n$. If we consider the open unit balls, we infer by (2) that
$$b_{X_n^{**}/X_n}=Q(b_{X_n^{**}})=Q\left(\conv\left(2^n \overline{W}^{w^*}\cup \frac{1}{2^n}b_{X^{**}} \right)\right),$$
whereas
$$b_{(X_0)_n}=\conv\left(2^n Q(\overline{W}^{w^*})\cup \frac{1}{2^n} b_{(X_0)_n^{**}}\right).$$
Now the identification is clear since $Q(b_{X_n^{**}})=b_{(X_0)_n}$

Now set $Z:=\left(\oplus_{n=1}^\infty X_n\right)_2$. Then $Z^{**}=\left(\oplus_{n=1}^\infty X_n^{**}\right)_2$.

Consequently, if we consider the isometric embedding $\varphi: Y\longrightarrow Z$ defined by $\varphi(y):=(y,y,y,\ldots)$, it follows that $\varphi^{**}:Y^{**}\longrightarrow Z^{**}$ is an isometric embedding too. Consider $P:Z^{**}\longrightarrow Z^{**}/Z$ the natural quotient map. Observe that clearly $Y= \ker(P\circ \varphi^{**})$ (since $\varphi^{**}(y)=\varphi(y)\in Z$). Now the above induces a mapping $\bar\varphi:Y^{**}/Y\longrightarrow Z^{**}/Z$ which is isometric.

Now consider $q:Z^{**}\longrightarrow \left(\oplus_{n=1}^\infty X_n^{**}/X_n \right)_2$ by the equation
$$q(x_n^{**})=(x_n^{**}+X_n).$$
Observe that $(x_n)\in \ker(q)$ if, and only if, $x_n\in X_n$ holds for every $n\in\mathbb N$, which means that $(x_n)\in Z$. 

Thus $q$ defines an isometry $\bar q:Z^{**}/Z\longrightarrow Z_0=\left(\oplus_{n=1}^\infty X_n^{**}/X_n\right)_2$.

Now $\bar q\circ\bar \varphi: Y^{**}/Y\longrightarrow Z_0$ is an isometric embedding in the diagonal of $Z_0$, which proves the result.
\end{proof}

\section{Proof of Theorem~\ref{theo:maintheorem}}\label{sect:finalresultingsect}
Set $1<p<\infty$.

Let $W$ be the set defined in \eqref{conjuntoW} and consider $Y$ to be the Banach space resulting of applying the interpolation process associated to $W=\frac{K_0-K_0}{2^\frac{1}{p}}$ and $X:=\left(\oplus_{n=1}^\infty \ell_p(T_\infty^n)\right)_{c_0}$ described in Section~\ref{section:interpola}.

\begin{theorem}\label{theo:espafuerteregular}
$Y^{**}$ is strongly regular.
\end{theorem}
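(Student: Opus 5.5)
The plan is to show that $Y^{**}$ contains no copy of $\ell_1$. Strong regularity then follows from the result of \cite{gms} already used in Section~\ref{section:weird}: a space not containing $\ell_1$ in which the relevant bounded closed convex sets are strongly regular (as in \cite[Proposition VI.3]{gms}). More directly, I expect to use the three-space route as follows.

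First, $Y^{**}$ is an extension of $Y$ by $Y^{**}/Y$. Since containing $\ell_1$ is a three-space property \cite[Theorem 3.2 d.]{cm}, it is enough to treat $Y$ and $Y^{**}/Y$ separately. Actually I will instead aim to show that $Y^{**}$ is separable-dual-like: $Y^*$ should be separable, so that $Y^{**}$ does not contain $\ell_1$ either way.

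Step 1 concerns $Y^{**}/Y$. By Proposition~\ref{prop:interpolcociente}(3), $Y^{**}/Y$ embeds isometrically into the space $Y_0$ obtained by interpolating $X_0=X^{**}/X$ with $W_0=Q(\overline{W}^{w^*})$.

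Here $\overline{W}^{w^*}=\frac{K-K}{2^{1/p}}$, because $K$ is $w^*$-compact and $K_0$ is $w^*$-dense in $K$. Hence $W_0\subseteq Q(\spann K)\subseteq Q(Y)\cong Y/X$. By Theorem~\ref{theo:cocientereflexivo}, $Y/X$ is reflexive, so $W_0$ is a bounded subset of a reflexive subspace of $X_0$. It is therefore relatively weakly compact in $X_0$.

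Proposition~\ref{prop:intercuentaini}(5) then gives that $Y_0$ is reflexive. Consequently $Y^{**}/Y$ is reflexive and in particular contains no $\ell_1$.

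Step 2 concerns $Y$. The formal identity $j\colon Y\to X$ is injective and bounded, and by Proposition~\ref{prop:intercuentaini}(4) it satisfies $(j^{**})^{-1}(X)=Y$, so $j$ is Tauberian. Moreover $X=(\oplus\ell_p)_{c_0}$ has separable dual and hence contains no $\ell_1$.

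A Tauberian operator into a space without $\ell_1$ forces the domain to lack $\ell_1$. I would argue this via Rosenthal's $\ell_1$ theorem: a bounded sequence in $Y$ has a subsequence whose image under $j$ is weakly Cauchy. I then need to transfer weak Cauchyness back to $Y$, which I would do by using $Y^{**}/Y$ reflexive. The $w^*$-cluster points of the subsequence lie in $Y^{**}$, and the injectivity of $j^{**}$ together with the injectivity of the induced map $Y^{**}/Y\to X^{**}/X$ gives uniqueness of the cluster point, hence weak Cauchyness in $Y$.

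I expect this transfer to be the main obstacle. A cleaner alternative I would try first is the three-space property applied to $Y\subseteq Y^{**}$ together with the known fact (cf. \cite{dfjp}) that for Tauberian $j$ with $X\not\supseteq\ell_1$ the domain $Y\not\supseteq\ell_1$.

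Step 3 combines the two steps. $Y$ and $Y^{**}/Y$ both fail to contain $\ell_1$, so $Y^{**}$ fails to contain $\ell_1$.

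To reach strong regularity I would rather argue through the structure of the bidual. $j^{**}$ is injective, so $B_{Y^{**}}$ embeds in $X^{**}$. Closed convex bounded subsets of $Y^{**}$ map into sets built from $w^*$-compact pieces of $\overline{\conv}(2^n\overline{W}^{w^*}\cup 2^{-n}B_{X^{**}})$. Here $\overline{W}^{w^*}$ is strongly regular since $K$ is, and $B_{X^{**}}$-perturbations are small. Following \cite[Section VI]{gms}, a set contained in $2^nC+2^{-n}B$ for every $n$, with $C$ strongly regular, is strongly regular. This gives strong regularity of $B_{Y^{**}}$, and then of $Y^{**}$ after localizing to arbitrary bounded convex sets via the same decomposition.
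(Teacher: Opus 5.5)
Everything up to the conclusion that $Y^{**}$ contains no copy of $\ell_1$ is sound. Step~1 is exactly Lemma~\ref{lemma:cocibidualporespacio}. Only the inclusion $\overline{W}^{w^*}\subseteq\frac{K-K}{2^{1/p}}$ is needed, and it follows from $w^*$-compactness of $K-K$; also, the space you call $Y$ there is $\overline{\spann}(K)$ from Section~\ref{section:weird}, not the interpolation space.

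Step~2 takes a different route from the paper. The paper obtains separability of $Y^*$ (Lemma~\ref{lemma:separadual}, via the isometric embedding $y\mapsto(y,y,\ldots)$ into $(\oplus_n X_n)_2$). You instead use Rosenthal's theorem in $X$ and pull weak Cauchyness back along $j$. That is valid, and the ``main obstacle'' you anticipate is not one. If $(jy_n)$ is weakly Cauchy with $w^*$-limit $x^{**}$, then every $w^*$-cluster point $y^{**}$ of $(y_n)$ satisfies $j^{**}y^{**}=x^{**}$. Injectivity of $j^{**}$ (Proposition~\ref{prop:intercuentaini}(4)) then forces a unique cluster point, hence $w^*$-convergence. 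Neither reflexivity of $Y^{**}/Y$ nor the induced map $Y^{**}/Y\to X^{**}/X$ plays any role. Your version needs only that $X$ contains no $\ell_1$; the paper's uses separability of the $X_n^*$ and gets the stronger conclusion that $Y^*$ is separable.

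Drop the aside that separability of $Y^*$ keeps $\ell_1$ out of $Y^{**}$ ``either way''. It is false: $c_0^*=\ell_1$ is separable while $c_0^{**}=\ell_\infty$ contains $\ell_1$. That is exactly why the three-space argument with $Y^{**}/Y$ is needed.

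The gap is in the last step. Not containing $\ell_1$ does not by itself give strong regularity: $c_0$ contains no $\ell_1$, yet every convex combination of slices of $B_{c_0}$ has diameter $2$. So your paraphrase of \cite[Proposition VI.3]{gms} as a statement about ``a space not containing $\ell_1$ in which the relevant bounded closed convex sets are strongly regular'' is circular, and it omits the hypothesis that matters. The paper concludes by applying that proposition to $B_{Y^{**}}$. This is a $w^*$-compact convex subset of the dual space $Y^{**}=(Y^*)^*$ whose span contains no $\ell_1$, so $B_{Y^{**}}$, and hence $Y^{**}$, is strongly regular.

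The alternative you say you would rather use does not replace this, for two reasons. First, the principle that a set lying in $2^nC+2^{-n}B$ for every $n$, with $C$ strongly regular, is itself strongly regular is asserted without any argument. Second, even granting it, it would give convex combinations of slices that are small in the norm of $X^{**}$. Strong regularity of $Y^{**}$ needs smallness in the norm of $Y^{**}$, which dominates the $X^{**}$-norm, so a further transfer result along $j^{**}$ would still be missing. Replace that final paragraph by the direct application of \cite[Proposition VI.3]{gms} to the $w^*$-compact set $B_{Y^{**}}$.
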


For the proof we will need a pair of lemmata.

\begin{lemma}\label{lemma:separadual}
$Y^*$ is separable.    
\end{lemma}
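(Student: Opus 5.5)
The aim is to show that $Y^*$ is separable. We want a separable space $E$ and a surjection $E\to Y^*$, or a chain of three-space arguments. The natural route goes through the embedding $\varphi\colon Y\to Z:=\left(\oplus_{n=1}^\infty X_n\right)_2$, $\varphi(y)=(y,y,\ldots)$, from the proof of Proposition~\ref{prop:intercuentaini}. Since $\varphi$ is an isometric embedding, $Y^*$ is a quotient of $Z^*=\left(\oplus_{n=1}^\infty X_n^*\right)_2$ via restriction. It therefore suffices to show that $Z^*$ is separable.

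An $\ell_2$-sum of countably many separable spaces is separable: finitely supported sequences with entries in countable dense subsets are dense. So the claim reduces to showing that each $X_n^*$ is separable. The norm $\Vert\cdot\Vert_n$ is equivalent to the original norm of $X$, because $\frac{1}{2^n}B_X\subseteq B_n\subseteq 2^nB_X$, the second inclusion coming from $W\subseteq B_X$. Hence $X_n^*$ is isomorphic to $X^*=\left(\oplus_{k=1}^\infty \ell_{p^*}(T_\infty^k)\right)_{\ell_1}$. This space is separable because $T_\infty$ is countable and $1<p^*<\infty$, so each $\ell_{p^*}(T_\infty^k)$ is separable, and a countable $\ell_1$-sum of separable spaces is separable. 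This fact was already used in the proof that $K$ is strongly regular.

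The chain is then $X^*$ separable $\Rightarrow$ each $X_n^*$ separable $\Rightarrow$ $Z^*$ separable $\Rightarrow$ $Y^*$ separable, the last step because $Y^*\cong Z^*/\varphi(Y)^\perp$ by Hahn--Banach. None of these steps is a serious obstacle. The only point needing care is identifying the dual of the $\ell_2$-sum of the $X_n$ with the $\ell_2$-sum of the duals, which is standard for $1<2<\infty$. No more delicate tools are needed, such as the reflexivity of $Y/X$ (Theorem~\ref{theo:cocientereflexivo}) or the quotient description of Proposition~\ref{prop:interpolcociente}. Those will presumably enter later, in the second lemma used for Theorem~\ref{theo:espafuerteregular}.
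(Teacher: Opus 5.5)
Your proposal is correct and matches the paper's proof: the isometric diagonal embedding $\varphi\colon Y\to Z=\left(\oplus_{n=1}^\infty X_n\right)_2$ makes $\varphi^*$ a quotient map from the separable space $Z^*=\left(\oplus_{n=1}^\infty X_n^*\right)_2$ onto $Y^*$, and each $X_n^*$ is separable because $X_n$ is isomorphic to $X$. You also justify that isomorphism explicitly via $\frac{1}{2^n}B_X\subseteq B_n\subseteq 2^nB_X$, a step the paper leaves implicit.
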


\begin{proof}
Set $Z:=\left(\oplus_{n=1}^\infty X_n\right)_2$, and consider tha mapping $\varphi:Y\longrightarrow Z$ by the equation
$$\varphi(y):=(y,y,y,\ldots).$$
By the proof of Proposition~\ref{prop:intercuentaini} (3) we get that $\varphi$ is an into isometry, so $\varphi^*:Z^*=\left(\oplus_{n=1}^\infty X_n^* \right)_2\longrightarrow Y^*$ is a quotient map. Since $Z^*$ is separable as $X_n^*$ is separable for every $n\in\mathbb N$ ($X_n$ is isomorphic to $\left( \oplus_{n=0}^\infty \ell_p(T_\infty^n)\right)_{c_0}$) the separability of $Y^*$ follows.
\end{proof}

\begin{lemma}\label{lemma:cocibidualporespacio}
$Y^{**}/Y$ is reflexive.
\end{lemma}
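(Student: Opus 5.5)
The plan is to use Proposition~\ref{prop:interpolcociente}~(3). It embeds $Y^{**}/Y$ isometrically into the space $Y_0$ obtained by applying the interpolation construction of Section~\ref{section:interpola} to $X_0:=X^{**}/X$ and $W_0:=Q(\overline{W}^{w^*})$. Since closed subspaces of reflexive spaces are reflexive, it suffices to prove that $Y_0$ is reflexive. For this I will invoke Proposition~\ref{prop:intercuentaini}~(5) applied to the pair $(X_0,W_0)$, so the whole task reduces to showing that $W_0$ is relatively weakly compact in $X^{**}/X$.

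To check that $W_0$ is relatively weakly compact, I will first describe it.

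\begin{enumerate}
\item Since $W=\frac{K_0-K_0}{2^{1/p}}$, and $\overline{K_0}^{w^*}\subseteq K$ (indeed equal, as $K_0$ is $w^*$-dense in $K$), we get
$$\overline{W}^{w^*}\subseteq \frac{K-K}{2^{1/p}}\subseteq Y.$$
Here we use that $K-K$ is $w^*$-compact, being the image of the $w^*$-compact set $K\times K$ under subtraction.
\item Hence $W_0\subseteq Q(Y)=Y/X$, where $Y/X$ is viewed as a closed subspace of $X^{**}/X$.
\item By Theorem~\ref{theo:cocientereflexivo}, $Y/X$ is reflexive. It is isometric to a subspace of $\ell_p(\Gamma)$, and the quotient norm computed in Proposition~\ref{prop:cocienteprincipio} agrees with that of $X^{**}/X$ restricted to $Y/X$, since $X\subseteq Y\subseteq X^{**}$.
\item $W_0$ is bounded, because $Q$ is bounded. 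So $W_0$ is a bounded subset of the reflexive space $Y/X$, hence relatively weakly compact there.
\item By the Hahn--Banach theorem the weak topology of $Y/X$ is the restriction of the weak topology of $X^{**}/X$. Therefore $W_0$ is relatively weakly compact in $X_0$.
\end{enumerate}

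Proposition~\ref{prop:intercuentaini}~(5) then gives that $Y_0$ is reflexive. Consequently $Y^{**}/Y$, being isometric to a closed subspace of $Y_0$, is reflexive.

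I expect two points to need care. The first is verifying that $\overline{W}^{w^*}$ lands inside $Y$, that is, inside the closed span of $K$. This rests on $K$ being $w^*$-closed and on $\overline{K_0}^{w^*}\subseteq K$. The latter is immediate since conditions (1)--(3) defining $K$ are coordinatewise and preserved under $w^*$ limits in $X^{**}=(\oplus\ell_p)_{\ell_\infty}$, where $w^*$ convergence on bounded sets implies coordinatewise convergence.

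The second is the hypotheses of Proposition~\ref{prop:intercuentaini} for $(X_0,W_0)$, namely that $W_0$ is a symmetric, closed, convex subset of $B_{X_0}$. Symmetry, convexity and containment in the unit ball are clear, as $\Vert Q\Vert\leq1$ and $\overline{W}^{w^*}\subseteq B_{X^{**}}$. Closedness follows because $W_0$ is a bounded convex set that is weakly compact in the reflexive space $Y/X$: it is the image of the $w^*$-compact set $\overline{W}^{w^*}$ under the map $Q|_Y\colon Y\to Y/X$. If there is any doubt, one can replace $W_0$ by its norm closure, which does not change the Minkowski functionals defining the norms $\Vert\cdot\Vert_n$.
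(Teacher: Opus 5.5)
Your proposal is correct and is essentially the paper's own argument. You embed $Y^{**}/Y$ into $Y_0$ by Proposition~\ref{prop:interpolcociente}~(3), use $\overline{W}^{w^*}\subseteq \frac{K-K}{2^{1/p}}\subseteq \overline{\spann}(K)$ to place $W_0$ inside the reflexive space $\overline{\spann}(K)/X$ of Theorem~\ref{theo:cocientereflexivo}, and conclude via Proposition~\ref{prop:intercuentaini}~(5) that $Y_0$ is reflexive (the paper's ``$X_0$'' at that point is a misprint for $Y_0$).

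Your first justification of the closedness of $W_0$ fails: $Q$ vanishes on the $w^*$-dense subset $W\subseteq X$ of $\overline{W}^{w^*}$ while $W_0\neq\{0\}$, so $Q$ is not $w^*$-to-weak continuous there. Your fallback, replacing $W_0$ by its norm closure (which leaves every $B_n$ unchanged), settles the point. Also, in steps 1--3 write $\overline{\spann}(K)$ instead of $Y$, since $Y$ now denotes the interpolation space.
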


\begin{proof}
According to Proposition~\ref{prop:interpolcociente} (3) we get that $Y^{**}/Y$ is isometric to a subspace of $Y_0$, which is the one obtained by applying the construction to $X_0:=X^{**}/X$ and $W_0:=Q(\overline{W}^{w^*})$, where $Q:X^{**}\longrightarrow X^{**}/X$ is the canonical quotient operator.

Observe that $\overline{W}^{w^*}\subseteq \frac{K-K}{2^\frac{1}{p}}$, then it is contained in $\overline{\spann}(K)$. Consequently, $Q(\overline{W}^{w^*})$ is contained in $\overline{\spann}(K)/X$. Since $\overline{\spann}(K)/X$ is reflexive (by Theorem~\ref{theo:cocientereflexivo}) we get that $Q(\overline{W}^{w^*})$ is relatively weakly compact. By (5) in Proposition~\ref{prop:intercuentaini} we infer that $X_0$ is reflexive. Since $Y^{**}/Y$ embeds isometrically into $X_0$, we get that $Y^{**}/Y$ is reflexive.
\end{proof}

Now we are ready to providing the pending proof.

\begin{proof}[Proof of Theorem~\ref{theo:espafuerteregular}]
Observe that $Y$ does not contain $\ell_1$ isomorphically since $Y^*$ is separable. Moreover, $Y^{**}/Y$ does not contain $\ell_1$ neither since it is reflexive. Consequently $Y^{**}$ does not contain $\ell_1$ since the property of containing $\ell_1$ is a three-space property. By \cite[Proposition VI.3]{gms} it follows that $B_{Y^{**}}$ (and hence $B_Y$) is strongly regular.
\end{proof}

Call $L:=\frac{3}{\sqrt{3}}W\subseteq B_Y$, where the inclusion follows by Proposition~\ref{prop:intercuentaini} (1). Now we have the following result.

\begin{theorem}\label{theo:abidebigrandesL}
Let $\mathcal U$ be a non-empty relatively weakly open subset of $L$. Then:
\begin{enumerate}
    \item $\mathcal U$ has radius 1.
    \item $\mathcal U$ has diameter $\geq 2^\frac{1}{p}$.
\end{enumerate}
\end{theorem}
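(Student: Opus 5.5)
The plan is to transfer Theorem~\ref{theo:weakopenconjuinter} from $X$ to $Y$ through the formal identity $j:Y\longrightarrow X$. Two facts are needed: the topologies agree on $L$, and norms go the right way under $j$.

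\textbf{Step 1: weak topologies on $L$.} Since $j$ is continuous, it is weak-to-weak continuous. So every relatively weakly open subset of $j(L)$ for the topology of $X$ pulls back to a relatively weakly open subset of $L$. I need the converse: a relatively weakly open subset $\mathcal U$ of $L$ (weak topology of $Y$) contains a set of the form $L\cap V$, where $V$ is weakly open in $X$. To get this I would use that $L=\sqrt{3}W$ is a bounded subset of $Y$ and $j(Y^*)$... more precisely, that $j^*(X^*)$ is norm dense in $Y^*$. This holds because $j$ is injective: $j^*(X^*)$ is then $w^*$-dense, and I would upgrade to norm density from the description $Y^*=\varphi^*(Z^*)$ in Lemma~\ref{lemma:separadual}, since each $X_n^*$ is $X^*$ with an equivalent norm. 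On bounded sets, the weak topology induced by a norm dense subspace of the dual coincides with the full weak topology. Hence $\mathcal U\supseteq\{y\in L: |f_i(y-y_0)|<\delta,\ i\le k\}$ for some $f_i\in X^*$. This gives a relatively weakly open subset of $\sqrt{3}W$ in $X$, which after scaling is a weakly open subset $\mathcal U'$ of $W$ with $\sqrt{3}\,\mathcal U'\subseteq \mathcal U$.

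\textbf{Step 2: the diameter estimate.} Here I would use the elements $\frac{\phi_1-\phi_2}{2^{1/p}}$ and $\frac{\phi_3-\phi_4}{2^{1/p}}$ built in the proof of Theorem~\ref{theo:weakopenconjuinter}. Their $X$-distance is at least $2^{1/p}$, and $\Vert u\Vert_X\le\sqrt{3}\,\vert\Vert u\Vert\vert$ by Proposition~\ref{prop:intercuentaini}(2). Then $\sqrt{3}u,\sqrt{3}v\in\mathcal U$ satisfy
$$\vert\Vert \sqrt3 u-\sqrt3 v\Vert\vert\ge \Vert u-v\Vert_X\ge 2^{1/p}.$$
So the scaling constant exactly cancels the norm of $j$.

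\textbf{Step 3: the radius.} Radius $\le 1$ is immediate since $L\subseteq B_Y$. For the lower bound, fix $y\in Y$ and apply Theorem~\ref{theo:weakopenconjuinter}(1) to $\mathcal U'$ with center $y/\sqrt3\in X$. This gives $u\in\mathcal U'$ with $\Vert u-y/\sqrt3\Vert_X\ge 1-\varepsilon$. Then
$$\vert\Vert \sqrt3u-y\Vert\vert\ge \tfrac{1}{\sqrt3}\Vert \sqrt3 u-y\Vert_X\ge 1-\varepsilon.$$

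The main obstacle is Step 1, which needs the equality of the two weak topologies on the bounded set $L$. If norm density of $j^*(X^*)$ turns out to be delicate, I would instead use $w^*$-density together with the fact that $Y^*$ is separable and $L$ is bounded. Alternatively, I would run the constructions of Theorem~\ref{theo:weakopenconjuinter} directly on neighbourhoods defined by finitely many functionals in $Y^*$, approximated uniformly on $L$ by functionals of $X^*$.
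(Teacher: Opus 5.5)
Your proposal is correct and follows essentially the same route as the paper: use norm density of $j^*(X^*)$ in $Y^*$ to shrink $\mathcal U$ to $\sqrt{3}$ times a relatively weakly open subset of $W$ in $X$, then invoke Theorem~\ref{theo:weakopenconjuinter} and let $\Vert j\Vert\le\sqrt{3}$ cancel the scaling factor. The one difference is how you get norm density: you use $Y^*=\varphi^*(Z^*)$, noting that finitely supported elements of $Z^*$ map into $j^*(X^*)$, whereas the paper uses injectivity of $j^{**}$; both work, but your fallback based only on $w^*$-density would not suffice, so rely on the norm-density argument.
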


\begin{proof}
Consider the natural inclusion $j:Y\longrightarrow X$ on $X$, which is continuous in virtue of Proposition~\ref{prop:intercuentaini} (2). Since $j^{**}$ is injective by (4) of Proposition~\ref{prop:intercuentaini} it follows that $j^*=X^*\longrightarrow Y^*$ has dense range. Hence we can assume without to loss of generality that $j(U)$ defines a non-empty relatively weakly open subset of $\frac{3}{\sqrt{3}}j(W)$. Let us prove from here both assertions.

To begin with, let $y\in Y$ and $\varepsilon>0$. Since $\frac{\sqrt{3}}{3}j(\mathcal U)$ is a non-empty weakly open subset of $W$ then it has radius $1$ by Theorem~\ref{theo:weakopenconjuinter}. Consequently, since $\frac{\sqrt{3}}{3}j(y)\in X$ we can find $\xi\in \frac{\sqrt{3}}{3}j(\mathcal U)$ such that $\left \Vert \frac{\sqrt{3}}{3}j(y)-\xi\right\Vert\geq 1-\varepsilon$. Now we can find $u\in\mathcal U$ such that $\xi=\frac{\sqrt{3}}{3}j(u)$. Puting all together we get
$$1-\varepsilon<\left\Vert \frac{\sqrt{3}}{3}j(y)-\frac{\sqrt{3}}{3}j(u) \right\Vert=\frac{\sqrt{3}}{3}\Vert j(y-u)\Vert\leq \frac{\sqrt{3}}{3}\Vert j\Vert\Vert y-u\Vert=\Vert y-u\Vert,$$
where we have used that $\Vert j\Vert\leq \frac{3}{\sqrt{3}}$ (Proposition~\ref{prop:intercuentaini}, (2)).

Summarising we have proved that, given any $y\in Y$ and any $\varepsilon>0$ there exists $u\in\mathcal U$ such that $\Vert y-u\Vert\geq 1-\varepsilon$. The arbitrariness of $\varepsilon$ concludes (1).

The proof of (2) is quite similar. Since every non-empty weakly open subset of $W$ has diameter $2^\frac{1}{p}$, given $\varepsilon>0$, by the argument above there are $u_1,u_2\in \mathcal U$ such that
\[\begin{split}
2^\frac{1}{p}-\varepsilon<\left\Vert \frac{\sqrt{3}}{3}j(u_1)-\frac{\sqrt{3}}{3}j(u_2) \right\Vert\leq \frac{\sqrt{3}}{3}\Vert j\Vert \Vert u_1-u_2\Vert\leq \Vert u_1-u_2\Vert.
\end{split}\]
The arbitrariness of $\varepsilon>0$ concludes that $\diam(\mathcal U)\geq 2^\frac{1}{p}$ and finishes the proof.
\end{proof}

Indeed, with a similar proof we can even get the following result.

\begin{proposition}\label{prop:improbidual}
Let $\mathcal U$ be a non-empty relatively $w^*$ open subset of $C=\overline{L}^{w^*}\subseteq Y^{**}$. Then $r(\mathcal U)=1$.
\end{proposition}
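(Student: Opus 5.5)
The plan is to transfer Theorem~\ref{theo:w*openbidual} from $X^{**}$ to $Y^{**}$ through the biadjoint $j^{**}:Y^{**}\to X^{**}$, in the same way Theorem~\ref{theo:abidebigrandesL} was obtained from Theorem~\ref{theo:weakopenconjuinter}. Since $\Vert j\Vert\leq\sqrt3$, we also have $\Vert j^{**}\Vert\leq \sqrt3$. Moreover $j^{**}$ is $w^*$-$w^*$ continuous and injective by Proposition~\ref{prop:intercuentaini} (4).

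First we identify $C$. The set $\overline{L}^{w^*}$ is $w^*$-compact in $Y^{**}$, and $j^{**}$ is $w^*$-continuous. Hence $j^{**}(C)$ is a $w^*$-compact set containing $j(L)=\sqrt3\,W$, and it is contained in the $w^*$-closure of $\sqrt3 W$. Therefore
$$j^{**}(C)=\sqrt3\,\overline{W}^{w^*}.$$
The restriction $j^{**}|_C:C\to \sqrt3\,\overline{W}^{w^*}$ is a continuous bijection from a compact space onto a Hausdorff space, so it is a $w^*$-homeomorphism. Consequently, for a non-empty relatively $w^*$-open $\mathcal U\subseteq C$, the set $\frac{\sqrt3}{3}j^{**}(\mathcal U)$ is a non-empty relatively $w^*$-open subset of $\overline{W}^{w^*}$. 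This compactness argument avoids the density-of-range issue for $j^*$, which the earlier proof only sketched.

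Next comes the radius estimate. Fix $y^{**}\in Y^{**}$ and $\varepsilon>0$. Apply Theorem~\ref{theo:w*openbidual} to the point $\frac{\sqrt3}{3}j^{**}(y^{**})\in X^{**}$. This gives $u\in\mathcal U$ with
$$\left\Vert \tfrac{\sqrt3}{3}j^{**}(y^{**})-\tfrac{\sqrt3}{3}j^{**}(u)\right\Vert>1-\varepsilon .$$
Then
$$1-\varepsilon<\tfrac{\sqrt3}{3}\Vert j^{**}\Vert\,\Vert y^{**}-u\Vert\leq \Vert y^{**}-u\Vert .$$
So $r(\mathcal U)\geq 1$. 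For the reverse bound, $C\subseteq B_{Y^{**}}$ because $L\subseteq B_Y$ and $B_{Y^{**}}$ is $w^*$-closed, so $\mathcal U\subseteq B(0,1)$ and $r(\mathcal U)\leq1$.

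I expect the main obstacle to be the dependence on Theorem~\ref{theo:w*openbidual}. In particular, its first step needs to hold: that a relatively $w^*$-open subset of $\overline{W}^{w^*}$ contains a set of the form $(V_1-V_2)/2^{1/p}$ with $V_i$ relatively $w^*$-open in $K$. This requires knowing that
$$\overline{W}^{w^*}=\frac{K-K}{2^{1/p}},$$
which holds because $K$ is $w^*$-compact and $K_0$ is $w^*$-dense in $K$. Beyond that, the argument is a direct transfer.
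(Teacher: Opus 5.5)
Your proof is correct and follows the paper's argument. You identify $j^{**}(C)=\sqrt3\,\overline{W}^{w^*}$ by the same compactness/continuity reasoning, transfer the open set, apply Theorem~\ref{theo:w*openbidual} to $\frac{\sqrt3}{3}j^{**}(y^{**})$, and conclude with $\Vert j^{**}\Vert\leq\sqrt3$. The small differences are improvements: your compact-to-Hausdorff homeomorphism argument justifies the transfer of relatively $w^*$-open sets more cleanly than the paper's appeal to the dense range of $j^*$, and your observation that $C\subseteq B_{Y^{**}}$ makes explicit the upper bound $r(\mathcal U)\leq 1$, which the paper leaves implicit.
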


\begin{proof}
Consider the natural inclusion $j:Y\longrightarrow X$ on $X$, which is continuous in virtue of Proposition~\ref{prop:intercuentaini} (2). First we claim that $j^{**}(C)=\frac{3}{\sqrt{3}}\overline{W^{**}}\subseteq X^{**}$. On the one hand, since $j^{**}$ is $w^*-w^*$ continuous we infer that
$$j^{**}(C)=j^{**}\left(\overline{L}^{w^*} \right)\subseteq \overline{j^{**}(L)}^{w^*}=\overline{\frac{3}{\sqrt{3}} W}^{w^*}=\frac{3}{\sqrt{3}}\overline{W}^{w^*}.$$
For the reverse one, we simply observe that $j^{**}(C)$ is a $w^*$ compact subset of $B_{X^{**}}$ containing $\frac{3}{\sqrt{3}}W$, so it must contain its $w^*$ closure.

Moreover, $j^{**}$ is injective by (4) of Proposition~\ref{prop:intercuentaini} it follows that $j^*=X^*\longrightarrow Y^*$ has dense range, we can assume up to loss of generality that $j^{**}(\mathcal U)$ defines a non-empty relatively $w^*$ open subset of $\frac{3}{\sqrt{3}}\overline{W}^{w^*}$. Let us now prove that $r(\mathcal U)=1$ from here.

In order to do so, let $y^{**}\in Y^{**}$ and $\varepsilon>0$. Since $\frac{\sqrt{3}}{3}j^{**}(\mathcal U)$ is a non-empty $w^*$ open subset of $\overline{W}^{w^*}$ then it has radius $1$ by Theorem~\ref{theo:w*openbidual}. Consequently, since $\frac{\sqrt{3}}{3}j^{**}(y^{**})\in X^{**}$ we can find $\xi^{**}\in \frac{\sqrt{3}}{3}j^{**}(\mathcal U)$ such that $\left \Vert \frac{\sqrt{3}}{3}j^{**}(y^{**})-\xi^{**}\right\Vert\geq 1-\varepsilon$. Now we can find $u^{**}\in\mathcal U$ such that $\xi^{**}=\frac{\sqrt{3}}{3}j^{**}(u^{**})$. Puting all together we get
\[\begin{split}1-\varepsilon<\left\Vert \frac{\sqrt{3}}{3}j^{**}(y^{**})-\frac{\sqrt{3}}{3}j^{**}(u^{**}) \right\Vert& =\frac{\sqrt{3}}{3}\Vert j^{**}(y^{**}-u^{**})\Vert\\
& \leq \frac{\sqrt{3}}{3}\Vert j^{**}\Vert\Vert y^{**}-u^{**}\Vert\\
& =\Vert y^{**}-u^{**}\Vert,
\end{split}\]
where we have used that $\Vert j^{**}\Vert=\Vert j\Vert\leq \frac{3}{\sqrt{3}}$ (Proposition~\ref{prop:intercuentaini}, (2)).

Summarising we have proved that, given any $y^{**}\in Y^{**}$ and any $\varepsilon>0$ there exists $u^{**}\in\mathcal U$ such that $\Vert y^{**}-u^{**}\Vert\geq 1-\varepsilon$. The arbitrariness of $\varepsilon$ concludes (1).
\end{proof}

Now following the ideas behind the proof of \cite[Theorem 3.3 (1)]{blr17} we can now provide the proof of Theorem~\ref{theo:renormainejem}.

\begin{proof}[Proof of Theorem~\ref{theo:renormainejem}.]
Consider the equivalent norm $\vert \cdot \vert$ on $Y$ whose unit ball is 
$$C:=\frac{1}{1+\varepsilon}(L+\varepsilon B_Y).$$
Since $C\subseteq B_Y$ it follows that $\vert y\vert\geq \Vert y\Vert$ holds for every $y\in Y$. 

It is immediate that the bidual unit ball is
$$\overline{C}^{w^*}=\frac{1}{1+\varepsilon}(\overline{L}^{w^{**}}+\varepsilon B_{Y^{**}}).$$
Let us prove that every $w^*$ open subset of the new bidual ball has radius, at least, $\frac{1}{1+\varepsilon}$. In order to do so, select any $y^{**}\in Y^{**}$, any $w^*$ open subset of the bidual unit ball and any $\delta>0$. Up to taking a smaller subset we can assume with no loss of generality that such open set is of the form
$$\frac{1}{1+\varepsilon}(U+\varepsilon V),$$
where $U$ is a non-empty $w^*$ open subset of $\overline{L}^{w^*}$ and $V$ is a non-empty $w^*$ open subset of $B_{Y^{**}}$. Let any $v_0\in V$ and, Proposition~\ref{prop:improbidual}, we can find $u\in U$ such that
$$\Vert ((1+\varepsilon)y^{**}-\varepsilon v_0)-u\Vert\geq 1-\delta.$$
Now
$$1-\delta<  (1+\varepsilon)\left\Vert y^{**}-\frac{1}{1+\varepsilon}(u+\varepsilon v_0)\right\Vert\leq (1+\varepsilon)\left\vert y^{**}-\frac{1}{1+\varepsilon}(u+\varepsilon v_0)\right\vert.$$
Since $\frac{1}{1+\varepsilon}(u+\varepsilon v_0)\in \frac{1}{1+\varepsilon}(U+\varepsilon V)$ and $\delta>0$ was arbitrary we conclude that
$$r\left(\frac{1}{1+\varepsilon}(U+\varepsilon V) \right)\geq \frac{1}{1+\varepsilon},$$
as requested.

The conclusion for the diameter follows by the proof of \cite[Theorem 3.3.1]{blr17}.
\end{proof}

\section{A way to get a possible counterexample}\label{sect:possiblesolut}

As we have pointed out in the Introduction of the present manuscript, we do not know whether the answer to Question~\ref{ques:ejeSRwithrBWOP} is affirmative. We present in this section a possible way to get an affirmative answer for the $w^*$ version in a dual Banach space, that is, a possibility for getting an example of a dual Banach space wich is SR and such that every non-empty relatively $w^*$ open subset of the unit ball has radius $1$.

In order to present a possible line of research we need to introduce a piece of notation. Let $X$ and $Y$ be Banach spaces and let $T:X\longrightarrow Y$ be an injective and bounded operator. According to \cite[p. 544]{gms} say that $T$ is:
\begin{enumerate}
    \item a \textit{semiembedding} if $T(B_X)$ is closed in $Y$;
    \item a \textit{Tauberian embedding} if $T(C)$ is closed in $Y$ for every closed, convex and bounded subset $C$ of $X$ and;
    \item a \textit{$G_\delta$-embedding} if for every closed and bounded subset $F$ of $X$ we have that $\overline{T(F)}\setminus T(F)=\bigcup\limits_{n\in\mathbb N} K_n$, where $K_n\subseteq Y$ is closed for every $n\in\mathbb N$. 
\end{enumerate}

The study of all the above notions were intensively studied in a series of papers \cite{bouros83,gm85,gmscouT,gms}, and many relations of how the RNP, (C)PCP and SR on $Y$ can be transferred to the space $X$ are exhibited. We refer the reader to the above mentioned papers for background on these notions. For background around Tauberian operators we refer to the book \cite{goma10}. 

Let $T:X\longrightarrow Y$ be an injective and bounded operator. In \cite[Proposition II.4]{gms} it is proved that if $Y$ is SR and $T$ is a $G_\delta$-embedding then $X$ is SR. In our next result we will get that the same conclussion holds if we assume on $T$ being Tauberian instead of a $G_\delta$-embedding.

\begin{theorem}\label{theo:taubstrongreg}
Let $X$ and $Y$ be two Banach spaces. Assume that $T:X\longrightarrow Y$ is an injective and Tauberian operator. If $Y$ is strongly regular, then $X$ is strongly regular.    
\end{theorem}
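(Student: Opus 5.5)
We want to show that every bounded, closed and convex $C\subseteq X$ contains convex combinations of slices of $C$ of arbitrarily small diameter. The natural route is to transfer the problem to $Y$ through $T$. By the Tauberian hypothesis, $D:=T(C)$ is a bounded, closed and convex subset of $Y$, and $T:C\to D$ is a continuous affine bijection. Since $Y$ is strongly regular, $D$ contains convex combinations of slices of arbitrarily small diameter. A slice $S(D,g,\alpha)$ pulls back to the slice $S(C,T^*g,\alpha)$ of $C$, because $\sup T^*g(C)=\sup g(D)$. Also $T\bigl(\sum\lambda_i S(C,T^*g_i,\alpha_i)\bigr)=\sum\lambda_i S(D,g_i,\alpha_i)$. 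So convex combinations of slices of $C$ correspond exactly to convex combinations of slices of $D$, with the $Y$-diameter of the image small. The whole difficulty is then to convert "small diameter after applying $T$" into "small diameter in $X$".

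To do this conversion, I would use the known characterisation of Tauberian operators: $T$ is Tauberian if and only if, for every bounded sequence $(x_n)$ with $(Tx_n)$ norm convergent, $(x_n)$ has a weakly convergent subsequence. (Equivalently, $T^{-1}(K)\cap B_X$ is relatively weakly compact for every compact $K\subseteq Y$.) I would argue by contradiction. Assume there is $\varepsilon>0$ such that every convex combination of slices of $C$ has diameter $>\varepsilon$. Since strong regularity is separably determined, I may reduce to $X$ separable. Then I would pass to a convex subset of $C$ on which $T$ is "almost isometric". Concretely, I would use Bourgain's lemma, which says every weakly open subset of $C$ contains a convex combination of slices, to run a transfinite/dentation-type argument in the spirit of \cite[Proposition II.4]{gms}.

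The cleaner route I would try first is to reduce to the $G_\delta$-embedding case already handled in \cite[Proposition II.4]{gms}. Since $T$ is injective and Tauberian, its restriction to any closed bounded convex set is a homeomorphism for the weak topologies on weakly compact pieces. It is also known, from the Tauberian literature (\cite{goma10}), that injective Tauberian operators are $G_\delta$-embeddings. The key verification: for a closed bounded $F\subseteq X$, $T(\overline{\mathrm{co}}(F))$ is closed. Moreover, $\overline{T(F)}\setminus T(F)$ can be written as a countable union of closed sets by writing $F$ as an intersection of countably many closed convex "neighbourhoods" $\overline{\mathrm{co}}(F\cap U_n)$ in the separable setting. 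If this reduction goes through, \cite[Proposition II.4]{gms} immediately gives that $X$ is strongly regular.

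The main obstacle is exactly this last point: proving that an injective Tauberian operator is a $G_\delta$-embedding. Closedness of images is only guaranteed for convex sets, whereas the $G_\delta$ condition concerns arbitrary closed bounded $F$. My plan there is as follows. Use separability to cover $F$, for each $k$, by countably many closed convex sets $C_{k,m}$ of diameter $<1/k$ (closed balls intersected with $\overline{\mathrm{co}}(F)$). Each $T(C_{k,m})$ is closed, and one then checks that
\[
T(F)=\bigcap_k\bigcup_m T(C_{k,m}\cap F)
\]
up to closure. This would express $T(F)$ as a $G_\delta$ relative to its closure. Completeness of $X$ and the closedness of each $T(C_{k,m})$ ensure that a point of $\bigcap_k\bigcup_m T(C_{k,m})$ is the image of a Cauchy limit lying in $F$.
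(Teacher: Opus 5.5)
Your overall architecture in the third and fourth paragraphs matches the paper's: reduce to separable spaces, show that $T$ is a $G_\delta$-embedding there, and apply \cite{gms}. However, your plan does not establish the step you yourself single out as the main obstacle. Take $C_{k,m}$ to be closed balls of radius $1/k$ centred at a dense sequence of $F$, intersected with $\overline{\co}(F)$. Then the identity $T(F)=\bigcap_k\bigcup_m T(C_{k,m})$ does hold. It is immediate from injectivity, since $\bigcap_k\bigcup_m C_{k,m}=F$, so no Cauchy argument is needed. But each $\bigcup_m T(C_{k,m})$ is an $F_\sigma$, so this only exhibits $T(F)$ as an $F_{\sigma\delta}$ set. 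That is the wrong Borel class: it does not show that $T(F)$ is a $G_\delta$, i.e.\ that $\overline{T(F)}\setminus T(F)$ is a countable union of closed sets. Similarly, taken literally, writing a closed $F$ as a countable intersection of closed convex sets $\overline{\co}(F\cap U_n)$ is impossible unless $F$ is convex. Your first two paragraphs do not close the gap either. Pulling slices back through $T^*$ is fine, but the passage from small diameter in $Y$ to small diameter in $X$ is left as an unexecuted ``transfinite/dentation-type argument''.

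The missing idea is to cover the complement of $F$, not $F$ itself. This is the Bourgain--Rosenthal argument, which the paper simply quotes as \cite[Proposition 1.8]{bouros83}. Let $X$ be separable and $F$ closed and bounded with $F\subseteq rB_X$.
\begin{itemize}
\item As $T(rB_X)$ is closed, $\overline{T(F)}\subseteq T(rB_X)$. So every $y\in\overline{T(F)}$ is $y=Tx$ for a unique $x\in rB_X$, and $y\notin T(F)$ iff $x\notin F$.
\item By separability, $X\setminus F=\bigcup_n B_n$ with $B_n$ closed balls. Each $T(rB_X\cap B_n)=T(rB_X)\cap T(B_n)$ is closed.
\item Therefore $\overline{T(F)}\setminus T(F)=\bigcup_n\bigl(\overline{T(F)}\cap T(rB_X\cap B_n)\bigr)$ is an $F_\sigma$.
\end{itemize}
Besides injectivity, this only needs $T$ to be a semi-embedding. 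It does use separability to get countably many balls, so you should not quote ``injective Tauberian operators are $G_\delta$-embeddings'' for arbitrary $X$. Instead you need the reduction to separable subspaces $Z$, applied to $T_{|Z}$, which is still injective and Tauberian. That reduction, which you only assert, is itself non-trivial. The paper proves it in Lemma~\ref{lemma:srsepardetermined}: it renorms so that every convex combination of slices of $B_X$ has diameter at least $\delta_0$, then passes to a separable almost isometric ideal, which inherits this property.
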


In order to prove it we will prove that the property of being SR is separably determined. Even though we believe this result is well known for speciallist, we include a proof for the sake of completeness. In order to do so, let us recall a bit of notation from \cite{aln2}. Let $Z$ be a subspace of a Banach space $X$.
We say that $Z$ is an \emph{almost isometric ideal} (ai-ideal) in $X$ if
$X$ is locally complemented in $Z$ by almost isometries.
This means that for each $\varepsilon>0$ and for each
finite-dimensional subspace $E\subseteq X$ there exists a linear
operator $T:E\to Z$ satisfying
\begin{enumerate}
\item\label{item:ai-1}
  $T(e)=e$ for each $e\in E\cap Z$, and
\item\label{item:ai-2}
  $(1-\varepsilon) \Vert e \Vert \leq \Vert T(e)\Vert\leq
  (1+\varepsilon) \Vert e \Vert$
  for each $e\in E$,
\end{enumerate}
i.e. $T$ is a $(1+\varepsilon)$ isometry fixing the elements of $E$.
If the $T$ satisfies only (\ref{item:ai-1}) and the right-hand side of
(\ref{item:ai-2}) we get the well-known
concept of $Z$ being an \emph{ideal} in $X$ \cite{gks}.

Note that the Principle of Local Reflexivity means that $X$ is an ai-ideal in $X^{**}$
for every Banach space $X$. Moreover, given an almost isometric ideal $Z$ in a Banach space $X$, it is known that if every convex combination of slices of $B_X$ has diameter $2$ then every convex combination of slices of $B_Z$ has diameter 2 too \cite[Proposition 3.3]{aln2}. The same proof allows to conclude that if every convex combination of slices of $B_X$ has diameter at least $\delta_0>0$ then every convex combination of slices of $B_Z$ has diameter at least $\delta_0$ too.

With this information in mind we can now prove the desired lemma.

\begin{lemma}\label{lemma:srsepardetermined}
Let $X$ be a Banach space. The following assertions are equivalent:
\begin{enumerate}
    \item $X$ is strongly regular.
    \item Every separable subspace of $X$ is strongly regular.
\end{enumerate}
\end{lemma}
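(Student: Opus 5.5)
The implication (1)$\Rightarrow$(2) is immediate. Strong regularity is inherited by closed subspaces, because a closed, convex and bounded subset $C$ of a subspace $Z\subseteq X$ is also a closed, convex and bounded subset of $X$. Moreover, every slice of $C$ taken with respect to $X^*$ is a slice of $C$ with respect to $Z^*$ (restrict the functional). So the convex combinations of slices of small diameter given by the strong regularity of $X$ are also convex combinations of slices of $C$ in $Z$.

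For (2)$\Rightarrow$(1) we argue by contraposition. Suppose $X$ is not strongly regular. Then there is a closed, bounded, convex set $C\subseteq X$ and $\delta_0>0$ such that every convex combination of slices of $C$ has diameter at least $\delta_0$. The aim is to produce a separable subspace $Z$ and a closed, bounded, convex $C_Z\subseteq Z$ with the same property. The natural candidate is a separable ai-ideal $Z$ in $X$ (or rather in the subspace $\overline{\spann}(C)$ after a translation), which we would obtain by the standard separable reduction for almost isometric ideals, and then $C_Z:=C\cap Z$.

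We then mimic \cite[Proposition 3.3]{aln2}, as the paper indicates. Given slices $S(C_Z,f_i,\alpha_i)$ with $f_i\in Z^*$, we extend each $f_i$ through the ai-ideal structure, using a Hahn--Banach extension operator $\varphi\colon Z^*\to X^*$ obtained from the local almost isometries. This gives slices $S(C,\varphi f_i,\alpha_i')$ of $C$ with $\sup \varphi f_i(C)$ controlled by $\sup f_i(C_Z)$. That control needs $C_Z$ to be norming enough, i.e. $C\cap Z$ weak-$*$ dense in an appropriate sense, and we would arrange this in the construction of $Z$ by closing under countably many approximations of suprema.

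Pick a convex combination $\sum\lambda_i x_i$, $y=\sum\lambda_i y_i$ with $x_i,y_i$ in these slices of $C$ and $\Vert x-y\Vert>\delta_0-\varepsilon$. Let $E$ be the finite-dimensional span of the $x_i,y_i$ together with the relevant points of $Z$, and pull the points back into $Z$ via a $(1+\varepsilon)$-isometry $T\colon E\to Z$ fixing $E\cap Z$.

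The main obstacle is twofold. First, $T(x_i)$ must stay in $C_Z$, whereas $T$ is linear but not $C$-preserving. Second, $T(x_i)$ must remain in the slice, i.e. $f_i(Tx_i)\approx(\varphi f_i)(x_i)$, which holds because $\varphi$ is built from the $T$'s. To handle the first point, we would first try to build $Z$ so that a countable dense set of points of $C$ lies in $Z$, and to write each $x_i$ (up to $\varepsilon$) as a convex combination of points of $Z\cap C$ rather than using $T$ on $C$ directly. In that case the slice condition holds automatically for suitable elements of $C\cap Z$. The step that remains is the diameter estimate: we transfer it by choosing the elements of $C\cap Z$ via a countable closure argument (a Löwenheim--Skolem type construction), so that for each finite family of rational functionals and parameters the witnesses of large diameter already lie in $Z$. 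This gives diameter at least $\delta_0-\varepsilon$ in $C_Z$, so $Z$ is not strongly regular, contradicting (2).
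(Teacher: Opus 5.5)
Your (1)$\Rightarrow$(2) is fine. Your (2)$\Rightarrow$(1), however, has a genuine gap, and it sits exactly at the two obstacles you name yourself.

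For an arbitrary closed bounded convex $C$ there is no reason why a Hahn--Banach extension $\varphi f$ of $f\in Z^*$ should satisfy $\sup\varphi f(C)\approx\sup f(C\cap Z)$. Nor is there a reason why the local operators $T\colon E\to Z$ of the ai-ideal should send points of $C$ anywhere near $C\cap Z$, since $C\cap Z$ may be very small compared with $C$. Your proposed remedies do not close this:
\begin{enumerate}
\item Putting ``a countable dense set of points of $C$'' into $Z$, or approximating each $x_i$ by convex combinations of points of $C\cap Z$, presupposes that $C$ is essentially separable. But if $C$ is separable the lemma is trivial: $Z=\overline{\spann}(C)$ already works, because slices of $C$ computed in $Z$ and in $X$ coincide by Hahn--Banach. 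So the whole difficulty is the non-separable case, where these steps are unavailable.
\item The L\"owenheim--Skolem closure over ``finite families of rational functionals'' cannot reach all slices of $C_Z$ either. Those slices are indexed by all $f\in Z^*$, and $Z^*$ is in general not norm separable, so restrictions of countably many elements of $X^*$ are not norm dense in $Z^*$. Weak$^*$ approximation of $f$ gives no containment between the corresponding slices, since it is not uniform on $C_Z$.
\end{enumerate}

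The missing idea is to reduce first to the unit ball. Strong regularity is an isomorphic property, so you may renorm $X$ using \cite[Corollary 3.5]{blr17} so that every convex combination of slices of $B_X$ has diameter at least $\delta_0$. Then take a separable ai-ideal $Z$ in $X$ \cite[Theorem 1.5]{abrahamsen15} and argue as in \cite[Proposition 3.3]{aln2} with $C_Z=B_Z$. Both of your obstacles now disappear:
\begin{enumerate}
\item The extension operator $\varphi$ is norm preserving, so $\sup\varphi f(B_X)=\Vert f\Vert=\sup f(B_Z)$.
\item $T$ is a $(1+\varepsilon)$-isometry with $f_i(Tx)=\varphi f_i(x)$ for $x\in E$. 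Choose $x_i,y_i$ in the slightly smaller slices $S(B_X,\varphi f_i,\alpha_i/2)$. Then $Tx_i/(1+\varepsilon)$ and $Ty_i/(1+\varepsilon)$ lie in $B_Z$, and, for $\varepsilon$ small, in $S(B_Z,f_i,\alpha_i)$.
\item Moreover $\Vert\sum_i\lambda_i(Tx_i-Ty_i)\Vert\geq(1-\varepsilon)\Vert\sum_i\lambda_i(x_i-y_i)\Vert$, so the diameter estimate $\delta_0$ passes to convex combinations of slices of $B_Z$.
\end{enumerate}
This is exactly the paper's proof.
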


\begin{proof}
(1)$\Rightarrow$(2) follows since the property of being strongly regular is hereditary.

(2)$\Rightarrow$(1). Let us assume that $X$ fails to be strongly regular, and let us prove that there exists a separable subspace $Y$ which fails to be strongly regular.

In order to do so, since $X$ fails to be strongly regular we can assume, up to considering an equivalent renorming \cite[Corollary 3.5]{blr17}, that every convex combination of slices of $B_X$ has diameter at least $\delta_0>0$. Now, by an application of \cite[Theorem 1.5]{abrahamsen15} we can find an almost isometric ideal $Z$ in $X$ which is separable. By an adaptation of the argument of \cite[Proposition 3.3]{aln2} we infer that every convex combination of slices of $B_Z$ has diameter, at least, $\delta_0$. Now $Z$ is a separable subspace of $X$ which fails to be strongly regular, as desired.
\end{proof}

Now we are ready to provide the pending proof.

\begin{proof}[Proof of Theorem~\ref{theo:taubstrongreg}]
By Lemma~\ref{lemma:srsepardetermined} it is enough to prove that any separable subspace $Z$ of $X$ is strongly regular. In order to do so, take $Z\subseteq X$ to be a separable subspace. Then $S=T_{|Z}:Z\longrightarrow Y$ is an injective and Tauberian operator so, in particular, it is a semiembedding. By \cite[Proposition 1.8]{bouros83} $S$ is a $G_\delta$ embedding. Since $Y$ is strongly regular and $S:Z\longrightarrow Y$ is a $G_\delta$ embedding we get that $Z$ is strongly regular by \cite[Proposition II.10]{gms}.

To sum up, we have proved that every separable subspace of $X$ is strongly regular, so $X$ is strongly regular in virtue of Lemma~\ref{lemma:srsepardetermined}, as desired.
\end{proof}

We conclude with a possible way to construct a dual Banach space $Z^*$ whose unit ball is strongly regular and all the weak$^*$ open subsets have radius 1. In order to do so, we will follow a construction from \cite[Proposition 2.5]{ll21}.

Let $Y$ and $L$ be satisfying the thesis of Theorem~\ref{theo:maintheorem}. Consider $K:=\overline{L}^{w^*}\subseteq B_{Y^{**}}$. Then $K$ is strongly regular since $Y^{**}$ is strongly regular and every non-empty relatively $w^*$ open subset of $K$ has radius 1. Following the proof of \cite[Proposition 2.5]{ll21}, for every $n\in\mathbb N$ define $\Vert \cdot\Vert_n$ the equivalent (dual) norm on $Y^{**}$ whose unit ball is $K+\frac{1}{n}B_{Y^{**}}$. Now define
$$Z:=\left\{z^{**}\in Y^{**}: \sup_{n\in\mathbb N}\Vert z^{**}\Vert_n<\infty \right\},$$
and define on $Z$ the norm given by $\vert z^{**}\vert=\sup_{n\in\mathbb N}\Vert z^{**}\Vert_n$. From the proof of \cite[Proposition 2.5]{ll21} it follows that $Z=\spann(K)$, that the formal identity $i:Z\longrightarrow Y^{**}$ is continuous and $\Vert i\Vert\leq 1$. Moreover, $(Z,\vert\cdot\vert)$ is a Banach space. Even more, it is a dual Banach space. Indeed, $(Z,\vert\cdot\vert)$ is a dual Banach space whose predual $W$ is described as the norm closure of $\{y^*_{|Z}: y^*\in Y^{*}\subseteq Y^{***}\}$. Consequently, if we consider $i^*:Y^{***}\longrightarrow Z^*$ then $i^*(Y^*)$ is norm-dense in $W$. 

Observe that, $i:Z\longrightarrow Y^{**}$ is a semiembedding since $i(B_Z)=i(K)=K$ is closed. We wonder the following.

\begin{question}\label{quest:tauberian}
Is $i:Z\longrightarrow Y^{**}$ a Tauberian operator?    
\end{question}

If $i$ were a Tauberian operator, then $Z$ would be a dual Banach space which is strongly regular by Theorems~\ref{theo:espafuerteregular} and \ref{theo:taubstrongreg}. Moreover, it follows that every non-empty relatively $w^*$ open subset of the unit ball has radius $1$. Indeed, consider $\mathcal U\subseteq K=B_Z$ we a non-empty relatively $w^*$ open subset, $z^{**}\in Z$ and $\varepsilon>0$, and let us find $u^{**}\in \mathcal U$ such that $\vert z^{**}-u^{**}\vert>1-\varepsilon$. In order to do so, by a density argument, we can assume with no loss of generality that
$$\mathcal U=\left\{u^{**}\in Z: \vert i^*(y^*)(u^{**}-u_0^{**})\vert<\eta, 1\leq i\leq n \right\},$$
for certain $\eta>0$, $y_1^*,\ldots, y_n^*\in Y^*$ and $u_0^{**}\in\mathcal U$. Observe that
$$i(\mathcal U)=\{i(u^{**})\in i(K)=K: \vert y_i^*(i(u^{**})-i(u_0^{**}))\vert<\eta, 1\leq i\leq n\}$$
is a non-empty relatively $w^*$ open subset of $K$. Since every non-empty relatively $w^*$ open subset of $K$ has radius $1$ we can find some $u^{**}\in K$ such that $i(u^{**})\in i(\mathcal U)$ and $\Vert i(u^{**})-i(z^{**})\Vert>1-\varepsilon$. Observe that $u^{**}\in \mathcal U$. Moreover, taking into account that $i$ is a norm-one operator, we obtain that
$$\vert u^{**}-z^{**}\vert\geq \Vert i(u^{**})-i(z^{**})\Vert>1-\varepsilon.$$

The arbitrariness of $\varepsilon>0$ and $z^{**}\in Z$ proves that $r(\mathcal U)=1$, as desired.

\section*{Acknowledgements}  

The authors are deeply grateful to Manuel Gonz\'alez for kindly answering questions around Question~\ref{quest:tauberian}.

This research has been supported  by MCIU/AEI/FEDER/UE\\  Grant PID2021-122126NB-C31, by MICINN (Spain) Grant \\ CEX2020-001105-M (MCIU, AEI) and by Junta de Andaluc\'{\i}a Grant FQM-0185.

\end{document}